\documentclass{amsart}[12pt]

\usepackage{fullpage}
\usepackage{amsmath}
\usepackage{amsfonts}
\usepackage{amssymb}
\usepackage[pdftex]{graphicx}

\numberwithin{equation}{section}

\newtheorem{theorem}{Theorem}
\newtheorem*{theorem*}{Theorem}
\newtheorem{lemma}{Lemma}

\theoremstyle{definition}

\newtheorem*{remark*}{Remark}
\newtheorem{remark}{Remark}

\begin{document}
\title{Counting functions for sums of rational powers of integers}

\author{Trevor Wine}

\begin{abstract}
Counting functions are constructed for sums of integers raised to a fixed positive rational power. That is, given values formed by $u_1^{j/k} + u_2^{j/k} + ... + u_l^{j/k}$, $u_i \in \mathbb{Z}^+$, the number of values less than or equal to a given $w>0$ is determined. The counting functions developed are framed in terms of convolution exponentials, and are closely related to the Riemann zeta function. At the conclusion, several estimates are derived, with special emphasis on the case of sums of square roots, i.e. $j=1$, $k=2$.
\end{abstract}
\keywords{sums of integers raised to rational powers, sums of square roots of integers, convolution exponential, Riemann zeta function, counting function} \subjclass[2010]{11M06, 11N45}

\maketitle

\section{\bf Introduction}

This paper develops counting functions for sums of positive rational powers, $j/k$ ($k>1$, $j/k$ in lowest terms), of positive integers. In other words, given the set $\mathbb{M}_{j,k} \equiv \{ u_1^{j/k} + u_2^{j/k} + ... + u_l^{j/k}  | u \in \mathbb{Z}^+ \}$, the number of elements at or below a given $w$ is determined (it should be noted, as the set notation implies, the values in $\mathbb{M}$ are unique--for example, $\sqrt{12}+\sqrt{48}$ and $\sqrt{3}+\sqrt{75}$ are counted only once). More formally, we study the class of functions in $j$ and $k$ defined by,
\begin{equation} \label{Sjk}
S_{j,k}(w) = \#\{v \in \mathbb{M}_{j,k} | v \le w\}.
\end{equation}

The main result of the paper is the following theorem for $S_{j,k}$:
\begin{theorem} \label{S_jk_theorem}
\[
S_{j,k}(w) = \int_{t=0^+}^{w} \exp^*(dI_{j,k}) dt,
\]
where $\exp^*$ denotes the convolution exponential, and where the exponential power function, $I_{j,k}$, is given in terms of the Riemann zeta function and its zeros,
\begin{equation} \begin{split} \nonumber
I_{j,k}(w) = & \frac{\zeta(1+k/j)}{\zeta(k)} w^{k/j} +  \lim_{m\to\infty}\stackrel{*}{\sum_{|\mbox{Im }\rho|<Y_m/(jk)}} \frac{\zeta(\rho/j+1)\zeta(\rho/k)} {\zeta ^\prime (\rho)}\frac{{ w}^{\rho/j}}{\rho} + \ln w \\
  & + \gamma - j\left( 1- \frac{1}{k} \right) \ln 2\pi + \mathcal{O}\left(w^{-3/(2j)+\epsilon}\right),
\end{split} \end{equation}
($\rho$ are the non-trivial zeros of the Riemann zeta function, $\zeta$; * indicates this form for the summands is valid for simple zeta zeros only; the $Y_m$ are specifically chosen to ensure convergence of the sum, as will be shown; $\gamma$ is the Euler-Mascheroni constant).

The sum over the zeta zeros and the lower order terms may be bounded as follows,
\[
I_{j,k}(w) = \frac{\zeta(1+k/j)}{\zeta(k)} w^{k/j} + \mathcal{O}(w^{1/j}),
\]
(where this bound may be improved, conditional on the Riemann hypothesis).
\end{theorem}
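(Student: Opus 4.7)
My plan is to reduce $S_{j,k}$ to a multi-index lattice count using unique $k$-free factorization, recognize the counting measure as a convolution exponential of a simpler atomic measure, and then extract the explicit form of $I_{j,k}$ by Perron--Mellin inversion against a classical zeta product.

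First, I would exploit the unique factorization $u = a b^k$ with $a$ $k$-free and $b \geq 1$, giving $u^{j/k} = b^j\, a^{j/k}$. Grouping terms by their $k$-free part rewrites every element of $\mathbb{M}_{j,k}$ as $\sum_{a\ k\text{-free}} c_a\, a^{j/k}$ with $c_a = \sum_{i : a_i = a} b_i^j$. A Besicovitch-type linear independence result (valid because $\gcd(j,k)=1$) shows that $\{a^{j/k} : a\ k\text{-free}\}$ is $\mathbb{Q}$-linearly independent, so distinct tuples $(c_a)$ produce distinct sums, and Waring's theorem ensures that every nonnegative integer $c_a$ is actually realizable as a sum of positive $j$-th powers. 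Consequently $S_{j,k}(w)+1$ equals the number of finitely supported $(c_a) \in \mathbb{Z}_{\geq 0}^{\infty}$ with $\sum_a c_a a^{j/k} \leq w$, the $+1$ accounting for the empty sum. The Laplace transform of this counting measure factors as $\prod_{a\ k\text{-free}}(1-e^{-s a^{j/k}})^{-1}$; taking its logarithm, expanding the geometric series, and using that additive convolution becomes multiplication under Laplace transform identifies $d(S_{j,k}+1) = \exp^*(dI_{j,k})$ with
\[
dI_{j,k} \;=\; \sum_{a\ k\text{-free}} \sum_{n \geq 1} \frac{1}{n}\,\delta_{n a^{j/k}},
\]
where integrating from $0^+$ excludes the unit atom at $0$ coming from the empty tuple, yielding the stated integral form.

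Next I would compute $I_{j,k}$ itself by Mellin inversion. Using $\sum_{a\ k\text{-free}} a^{-t} = \zeta(t)/\zeta(kt)$, the Mellin--Stieltjes transform of $dI_{j,k}$ is
\[
\int_0^\infty w^{-s}\, dI_{j,k}(w) \;=\; \zeta(s+1)\,\frac{\zeta(sj/k)}{\zeta(sj)},
\]
and Perron's formula represents $I_{j,k}(w)$ as $\frac{1}{2\pi i}$ times the contour integral of this against $w^s/s$. Shifting leftward past the critical strip collects three types of residue: at $s = k/j$ the simple pole of $\zeta(sj/k)$ gives the leading term $\zeta(1+k/j)w^{k/j}/\zeta(k)$; at $s = 0$ the double pole produced by $\zeta(s+1)$ and the $1/s$, expanded using $\zeta(0) = -1/2$ and $\zeta'(0) = -\tfrac{1}{2}\ln 2\pi$, contributes $\ln w + \gamma - j(1-1/k)\ln 2\pi$; and at each simple nontrivial zero $\rho$ the pole of $1/\zeta(sj)$ at $s = \rho/j$ contributes $\zeta(\rho/j+1)\zeta(\rho/k)\, w^{\rho/j}/(\rho\,\zeta'(\rho))$. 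Closing the contour at $\operatorname{Re}(s) = -3/(2j)+\epsilon$, between $s=0$ and the first trivial zero $s = -2/j$, bounds the remaining integral by $\mathcal{O}(w^{-3/(2j)+\epsilon})$.

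The hardest step will be the rigorous handling of the sum over nontrivial zeros, which is only conditionally convergent. As in the Riemann--von Mangoldt explicit formula, the truncation heights $Y_m/(jk)$ must be chosen to avoid clusters of zeros so that a uniform bound $|1/\zeta(\sigma+it)| \ll t^A$ holds on the closing horizontal segments; one must then verify that $\zeta(s+1)$ and $\zeta(sj/k)$ are tame there via standard convexity estimates. Once the explicit formula is in hand, the unconditional bound $I_{j,k}(w) = \zeta(1+k/j)w^{k/j}/\zeta(k) + \mathcal{O}(w^{1/j})$ follows by dominating each zero term by $w^{\operatorname{Re}(\rho)/j} \leq w^{1/j}$ and summing with the classical density of zeros; under the Riemann hypothesis $\operatorname{Re}(\rho) = 1/2$ replaces $1$ with $1/2$ in the exponent, yielding the promised conditional improvement.
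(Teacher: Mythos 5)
Your overall architecture---treating the $k$-free radicals $a^{j/k}$ as additive primes, factoring the Laplace transform of the counting measure as $\prod_a(1-e^{-sa^{j/k}})^{-1}$, identifying $\exp^*(dI_{j,k})$ via the convolution theorem, and then Mellin-inverting $\zeta(s+1)\zeta(sj/k)/\zeta(sj)$ with residues at $s=k/j$, $s=0$ and $s=\rho/j$---is essentially the paper's argument (the paper works in the variable $z=s/k$, so its kernel reads $\zeta(kz+1)\zeta(jz)/\zeta(jkz)\,w^{kz}/z$), and your residue computations are all correct. There are, however, two genuine gaps. First, you assert that the remainder integral on $\operatorname{Re}(s)=-3/(2j)+\epsilon$ is $\mathcal{O}(w^{-3/(2j)+\epsilon})$ as though it were routine. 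For $j\ge1,k\ge3$ and for $j\ge2$ the integral converges absolutely on that line and the bound is immediate, but for $j=1,k=2$ it converges absolutely nowhere in $\operatorname{Re}(s)<0$: by the functional equation the integrand decays only like a power of $|y|$ with exponent that is not below $-1$ throughout the relevant range, so even convergence must be established. The paper devotes its Theorem 2 and the entire appendix to this, expanding $\zeta(-2z)\zeta(1-z)/\zeta(1-2z)$ as a Dirichlet series with totient-type coefficients and running a term-by-term steepest-descent analysis; without something of this kind your explicit formula is unproved precisely in the headline case of sums of square roots.

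Second, your route to the unconditional bound $I_{j,k}(w)=\zeta(1+k/j)w^{k/j}/\zeta(k)+\mathcal{O}(w^{1/j})$---dominating each zero term by $w^{1/j}$ and ``summing with the classical density of zeros''---cannot work. The zero sum is only conditionally convergent (hence the specially chosen truncation heights $Y_m$), there is no unconditional lower bound on $|\zeta'(\rho)|$ (nor even known simplicity of all zeros), and even granting these, $N(T)\sim(T/2\pi)\log T$ together with the convexity growth of $|\zeta(\rho/k)|$ in $|\operatorname{Im}\rho|$ leaves an absolutely divergent series. The paper obtains this bound by an entirely different, elementary route: the identity $I_{j,k}(w)=\sum_{m\le w}m^{-1}Q_k\left((w/m)^{k/j}\right)$ together with the classical error term $Q_k(x)=x/\zeta(k)+\mathcal{O}(x^{1/k})$, which after summing over $m$ gives $\mathcal{O}(w^{(k/j)(1/k)})=\mathcal{O}(w^{1/j})$. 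You would need to substitute an argument of this kind. (A small aside: Waring's theorem is not needed to realize arbitrary coefficients $c_a$; taking $b_i=1$ repeatedly already yields every positive integer multiple of $a^{j/k}$.)
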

The theorem will be proved in sections \ref{remainder_integral_summary_section} and \ref{I_error_bound_section}, after the necessary groundwork.

The main approach of the paper is to borrow the Euler product form for the Riemann zeta function, and substitute a set of relevant `additive primes', for the usual multiplicative primes. Specifically, the counting functions, $S_{j,k}(w)$, are built based on an analog to the familiar Riemann zeta formulation as an Euler product,
\begin{equation} \label{zeta_1}
\zeta(s) = \sum_n n^{-s} = \prod_p \left( 1-p^{-s} \right)^{-1} \mbox{ Re $s > 1$},
\end{equation}
namely,
\begin{equation} \label{Z_jk}
Z_{j,k}(s) = 1+\sum_{v \in \mathbb{M}_{j,k}}  e^{-s v} = \prod_{i} \left( 1-e^{-s n_i^{j/k}} \right)^{-1},
\end{equation}

where the $n_i$ are k-free integers, and region of convergence will be determined below. The analog to the product form follows since just as any integer can be written uniquely as $p_{1}^{j_1}...p_{t}^{j_t}$, the $p_{i}$ multiplicative primes, so any value in $\mathbb{M}_{j,k}$ can be written uniquely as $m_1 n_1^{j/k} +... + m_l n_l^{j/k}$, $m_i \in \mathbb{Z}^+$, where each $n_i$ is a \emph{unique} k-free number; the set $\{ n_i^{j/k} \}_i$ constitutes the desired `additive primes'. To help see the legitimacy of this unique `prime factorization', consider that for any $u_1^{j/k} + u_2^{j/k} + ... + u_l^{j/k}$, each $u_i$ can be uniquely factored into a k-free, and a `k-full' portion. Repeated addition among like k-free terms (certainly at least when $u_i$ itself is k-free) can then yield any integer multiple of a given $n_i^{j/k}$.

\section{\bf $\mathbf{S_{j,k}(w)}$ as a convolution exponential}

\subsection{$\mathbf{Z_{j,k}(s)}$ product and sum forms.}

  To develop $Z_{j,k}$, both the product and sum forms in $(\ref{Z_jk})$ will be shown to converge to the same analytic function in the region $H \equiv \{ s | \mbox{Re }s = a > 0\}$. 

  To examine the product form first, a basic theorem on infinite products (see \cite{Markushevich}, vol. 1) will be useful:

\begin{theorem*}
If every term of the sequence of functions $\{v_i(s)\}$ is analytic on domain G, and if $v_i(s) \ne 1$ for all $s$ in G, and if there exists a convergent series $\sum_n M_n$ such that $|v_i(s)| \le M_n$ for all $s$ in G, then the product,
\[
\prod_{i} \left( 1-v_i(s) \right)^{-1}
\]
converges uniformly on G to a nonvanishing analytic function $f(s)$.
\end{theorem*}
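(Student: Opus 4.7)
The plan is to reduce the convergence of the infinite product to that of a logarithmic series, then apply the Weierstrass theorem that a uniform limit of analytic functions is itself analytic. Since $\sum_n M_n$ converges, $M_n \to 0$, so there exists $N_0$ with $M_n < 1/2$ for all $n \ge N_0$. I would split the product as
\[
\prod_{i < N_0}(1 - v_i(s))^{-1} \cdot \prod_{i \ge N_0}(1 - v_i(s))^{-1};
\]
the initial factor is a finite product of functions that are analytic on $G$ and nonvanishing (by the hypothesis $v_i(s) \ne 1$), so it is itself analytic and nonvanishing, and attention can be focused on the tail.

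For $i \ge N_0$, the image $v_i(G)$ lies in the open disk $|z| < 1/2$, so $1 - v_i(s)$ lies in the disk $|z-1| < 1/2$, which avoids the branch cut of the principal logarithm. Hence $-\log(1 - v_i(s))$ is well-defined and analytic on $G$, and from the Taylor series $-\log(1-v) = \sum_{k \ge 1} v^k/k$ one obtains
\[
|{-\log(1 - v_i(s))}| \;\le\; \frac{|v_i(s)|}{1 - |v_i(s)|} \;\le\; 2|v_i(s)| \;\le\; 2 M_i
\]
uniformly on $G$. The Weierstrass M-test then gives uniform convergence on $G$ of the series $L(s) = \sum_{i \ge N_0} -\log(1 - v_i(s))$, and the standard uniform-limit theorem guarantees that $L(s)$ is analytic on $G$.

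Exponentiating yields analyticity and nonvanishing of $e^{L(s)}$. Uniform convergence of the tail partial products to $e^{L(s)}$ follows from the uniform continuity of $\exp$ on the bounded set $\{w : |w| \le 2 \sum_{i \ge N_0} M_i\}$, in which $L$ and all its partial sums take values. Multiplying the tail limit by the analytic, nonvanishing initial factor produces the stated analytic, nonvanishing function $f(s)$.

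The only genuine subtlety is keeping the logarithm single-valued, which is precisely why one splits off the first $N_0 - 1$ factors: on the tail the principal branch applies uniformly, after which everything reduces to the M-test and the Weierstrass theorem on analytic convergence. I expect no further obstacles, as this is a classical argument; the work elsewhere in the paper uses the conclusion as a black box to validate the product form of $Z_{j,k}(s)$.
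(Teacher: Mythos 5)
The paper does not prove this statement at all: it is quoted verbatim as a known theorem on infinite products with a citation to Markushevich, and is then simply applied to the product form of $Z_{j,k}(s)$. Your argument is the standard textbook proof of that classical result (split off the finitely many indices with $M_n\ge 1/2$, pass to $-\log(1-v_i)$ on the tail, apply the Weierstrass M-test, and exponentiate), and it is correct. The one point worth noting is that uniform convergence of the \emph{full} partial products on $G$ requires the head factor $\prod_{i<N_0}(1-v_i(s))^{-1}$ to be bounded on $G$, which does not follow from $v_i\ne 1$ alone when some $M_i\ge 1$; in the paper's application this is harmless, since there $M_n=e^{-\delta n^{j/k}}<1$ for every $n$, so each $|1-v_n(s)|\ge 1-M_n>0$ and every factor is uniformly bounded.
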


Application of the theorem to the product form for $Z_{j,k}(s)$, 
\begin{equation} \label{Z_product_form}
\prod_{i} \left( 1-e^{-s n_i^{j/k}} \right)^{-1},
\end{equation}
follows easily when the bounding $M_n$ are applied over the domain $H_{\delta}\equiv \{ s | \mbox{Re }s = a \ge \delta > 0 \}$,  as $M_n = e^{-\delta n^{j/k}}$, producing
\begin{equation} \label{ln_Z_jk_rough_bound}
\sum_n M_n < \int_{x=0}^{\infty} e^{-\delta x^{j/k}} dx = \frac{k}{j} \frac{1}{\delta^{k/j}} \int_{y=0}^{\infty} y^{k/j-1}e^{-y} dy = \frac{k}{j} \Gamma \left( \frac{k}{j} \right) \frac{1}{\delta^{k/j}} ,
\end{equation}
and completing the requirements of the theorem. The product form therefore converges to an analytic function in the $H$ half-plane and does so uniformly in any $H_{\delta}$.

To prove convergence of the sum form of $Z_{j,k}(s)$, consider $s \in \mathbb{R} \bigcap H = x$, and notice
\[
1+\sum_{v \in \mathbb{M}_{j,k}, v<N}  e^{-x v} < \prod_{i=1}^{M} \left( 1-e^{-x n_i^{j/k}} \right)^{-1} < \prod_{i=1}^{\infty} \left( 1-e^{-x n_i^{j/k}} \right)^{-1} < B,
\]
where the leftmost inequality follows by choosing $M$ large enough in the product to cover all $v < N$ in the sum, and the rightmost inequality follows from convergence of the product. The infinite sum, being monotone and bounded, converges. Then from a well known result for Dirichlet series, if the series converges at $\mbox{Re}(s) = x$ then it converges in the half-plane $\{s|\mbox{Re}(s)>x\}$ (see \cite{Markushevich}, vol. 2). This proves convergence of the sum in $(\ref{Z_jk})$ in half-plane $H$, and by analytic continuation equals the same analytic function as product form $(\ref{Z_product_form})$.

\subsection{\bf Initial construction of $\mathbf{S_{j,k}}$; deriving a form for $\mathbf{\ln Z(s)}$.}

Applying Perron's formula to the Dirichlet series, $Z_{j,k}(s)$ from above, the main formula for the counting functions $(\ref{Sjk})$ will be,
\begin{equation} \label{S_integral_1}
S_{j,k}(w) = \frac{1}{2\pi i} \int_{s=a-i\infty}^{a+i\infty} e^{sw} \frac{e^{\ln Z(s)}}{s} ds -1,
\end{equation}
where $\mbox{Re } s > 0$, and with the important restrictions that integer powers are not allowed, and that the fraction $j/k$ must be in lowest form. The $-1$ correction factor prevents counting the extraneous $1$ in the sum form of $(\ref{Z_jk})$, the value $v=0$ not being an element of $\mathbb{M}_{j,k}$. (For details on applying Perron's formula to Dirichlet series to extract counting functions, see for example \cite{Hardy}.) Note also that $e^{\ln Z(s)}$ has been used instead of $Z(s)$ in the integrand of $(\ref{S_integral_1})$. This is because, as we will see, a convenient expression for $\ln Z(s)$ is much easier to derive than one for $Z(s)$.

Before developing an expression for $\ln Z_{j,k}(s)$, two of its sum forms will be shown. The first follows from the Euler product form for $Z_{j,k}(s)$ of equation $(\ref{Z_jk})$. We get, on applying the logarithm,
\[
\ln Z_{j,k}(s) = \sum_i -\ln(1-e^{-s n_i^{j/k}}) = \sum_i \left( e^{-s n_i^{j/k}} + \frac{e^{-2s n_i^{j/k}}}{2} + \frac{e^{-3s n_i^{j/k}}}{3} + ... \right)
\]
\begin{equation} \label{ln_Z_jk_sum_form}
= \sum_{m,i} \frac{1}{m} e^{-smn_i^{j/k}}
\end{equation}
(where the condition to assure convergence of the Taylor expansion of the logarithm, $|e^{-s n_i^{j/k}}|<1$, holds everywhere in the $\mbox{Re}^+$ half-plane $H$). 

Another sum form for $(\ref{ln_Z_jk_sum_form})$ will be useful:
\begin{equation} \label{ln_Z_sum_form}
= \sum_{l=1}^{\infty} a_l e^{-s \sqrt[k]{l}},
\end{equation}
where 
\[
a_l = 
  \left\{
    \begin{array}{ll}
	0 & \mbox{ if $l\ne m^k n_i^j$ }, \\
	\frac{1}{m} & \mbox{ if $l = m^k n_i^j$},
    \end{array}
  \right.
\]
In both sums, $m \in \mathbb{Z}^+$, and, as above, the $n_i$ index the k-free integers.

To develop an analytic expression for $\ln Z(s)$, denoting $Q_k(t)$ as the k-free counting function, and $dQ_k$ as having point masses of weight $1$ at each k-free integer, we have the following Stieltjes integral,
\begin{equation} \label{ln_Z_dQ_integral}
\ln Z_{j,k}(s) = -\int_{t=0}^{\infty} \ln(1-e^{-st^{j/k}})dQ_k(t).
\end{equation}

Next, in a well-known result the k-free counting function may be written in terms of the Riemann zeta function,
\[
Q_k(t) = \frac{1}{2\pi i} \int_{z=x-i\infty}^{x+i\infty} \frac{\zeta(z)t^z}{\zeta(kz) z} dz 
\]
when $\mbox{Re }z=x>1$. (This follows from an application of Perron's formula to,
\begin{equation} \label{zeta_square_free}
\sum_n \frac{\mu_k(n)}{n^s} = \frac{\zeta(s)}{\zeta(ks)},
\end{equation}
where $\mu_k(n)$ denotes the characteristic function for the k-free integers (not to be confused with the (signed) Mobius function), as may be readily derived by writing each zeta function in its product form.) Equation $(\ref{ln_Z_dQ_integral})$ may then be integrated by parts (noting $Q_k(t)=0$ when $t<1$ and $\ln(1-e^{-s t^{j/k}}) Q_k(t) \rightarrow 0$ as $t\rightarrow \infty$). Substituting $u = t^{j/k}$ we  then have for have for $(\ref{ln_Z_dQ_integral})$,
\begin{equation} \label{ln_Z_jk_pre_swap}
= \int_{u=0}^{\infty}  \frac{s}{2 \pi i} \int_{z=x-i\infty}^{x+i\infty}   \frac{u^{(k/j)z}}{e^{su}-1} \frac{\zeta(z)}{\zeta(kz) z} dz du.
\end{equation}

To obtain an expression for $\ln Z(s)$, we'll swap the order of integration. The swap will be proved by first assuming it is true, then confirming the result equals one of the sum forms for $\ln Z_{j,k}(s)$ just above. This approach has the advantage of better illustrating the role of the resulting tri-zeta term in the integrand, which is central to both $\ln Z_{j,k}(s)$ and the eventual integral form for $I_{j,k}(w)$.

Swapping produces,
\begin{equation} \label{ln_Z_jk_post_swap}
\frac{s}{2 \pi i} \int_{z=x-i\infty}^{x+i\infty} \frac{\zeta(z)}{\zeta(kz) z} \int_{u=0}^{\infty} \frac{u^{(k/j)z}}{e^{su}-1} du dz = \frac{1}{2\pi i} \int_{z=x-i\infty}^{x+i\infty} \frac{\zeta(z)}{\zeta(kz) z} \frac{1}{s^{(k/j)z}} \int_{v=0}^{s\infty} \frac{v^{(k/j)z}}{e^v-1} dv dz
\end{equation}
after the substitution $v = su$. Now by common results in Mellin transforms (see, for example, \cite{Edwards}),
\begin{equation} \label{zeta_gamma_trick}
\int_0^{\infty} \frac{y^{\beta-1}}{e^y - 1} dy = \Gamma(\beta)\sum_{n=1}^{\infty} \frac{1}{n^{\beta}} = \Gamma(\beta)\zeta(\beta)
\end{equation}
where $\mbox{Re }\beta$ must be greater than $1$. Noting the integrand of $(\ref{ln_Z_jk_post_swap})$ is bounded as
\[
\left| \frac{v^{(k/j)z}}{e^v-1} \right| < c_0 \frac{ \left(|s|R\right)^{(k/j)x} e^{(k/j)(\pi/2)|y|} }{ e^{aR} }
\]
along the arc from $Rs$ to $R|s|$ where $a>0$, $R >> 0$ and $v = re^{i\theta}$, the contour integral along this arc goes to zero as $R\rightarrow \infty$, producing by Cauchy's theorem,
\[
\int_{v=0}^{s\infty} \frac{v^{(k/j)z}}{e^v-1} dv = \Gamma((k/j)z+1) \zeta((k/j)z+1).
\]
Putting it all together, with the change of variables $z/j \rightarrow z$, we have,
\begin{equation} \label{ln_Z_jk_tri_zeta}
\frac{1}{2\pi i} \int_{z=x-i\infty}^{x+i\infty} \frac{\zeta(kz+1)\zeta(jz)}{\zeta(jkz)}\frac{k\Gamma(kz)}{s^{kz}}dz 
\end{equation}
where $\mbox{Re }z >1/j$ and $\mbox{Re }s > 0$.

To now show the swap was valid and that $(\ref{ln_Z_jk_tri_zeta})$ in fact equals $\ln Z_{j,k}(s)$, notice the tri-zeta term can be expanded as a double series,
\[
\frac { \zeta(kz+1) \zeta(jz) }{ \zeta(jkz) } = \left( \sum_{n=1}^{\infty} \frac{ \mu_k(n) }{ n^{jz} } \right) \left( \sum_{m=1}^{\infty} \frac{1/m}{m^{kz}} \right).
\]

Since both series are absolutely convergent for $\mbox{Re}(z)>1/j$, distributivity produces a convergent double series:
\begin{equation} \label{tri_zeta_positive}
\frac { \zeta(kz+1) \zeta(jz) }{ \zeta(jkz) } = \sum_{m,i} \frac{1}{m} \left( m^k n_i^j \right)^{-z} = \sum_l a_l l^{-z},
\end{equation}
where as usual, the $n_i$ index the k-frees, and $a_l$ is defined as in equation $(\ref{ln_Z_sum_form})$. This produces, for $(\ref{ln_Z_jk_tri_zeta})$,
\[
=\frac{1}{2\pi i} \int_{z=x-i\infty}^{x+i\infty} \left( \sum_{l=1}^{\infty} a_l l^{-z} \right) \frac{k\Gamma(kz)}{s^{kz}}dz.
\]
This can readily be evaluated by swapping the order of summation and integration, the swap justified by the dominated convergence theorem. (Note that
\[
\sum_{l=1}^{\infty} \int_{z=x-i\infty}^{x+i\infty} \left| a_l l^{-z} \frac{k\Gamma(kz)}{s^{kz}} \right| dz = k \sum_l a_l l^{-x} \int_z \left|\frac{\Gamma(kz)}{s^{kz}} \right|dz < \infty,
\]
where the final inequality follows from the convergence of the gamma function integral, noting that if $s=re^{i\theta}$ and $z=qe^{i\phi}$, then $|s^{-(k/j)z}| < c_1 e^{(k/j)y(\pi/2-\epsilon)}$ and $\Gamma((k/j)z) < c_2e^{-(k/j)y\pi/2}$, where we've assumed $|\arg \{z,s\}|<\pi$ for the estimates, and since $\mbox{Re }s>0$ means $|\theta|<\pi/2$, while $|\phi|\rightarrow \pi/2$ in the limit of large $|y|$. The dominated convergence theorem now may be applied to the partial sums in $l$, using $g(z)=\sum_l k a_l l^{-x} |\Gamma(kz) s^{-kz}|$ as the dominating function.)

The post-swapped form then is,
\[
\sum_{l=1}^{\infty} a_l \frac{1}{2\pi i} \int_{z=x-i\infty}^{x+i\infty}  k\Gamma(kz)(s\sqrt[k]{l})^{-kz}dz.
\]

By common results on Mellin transforms (see for example \cite{G_R}), provided $\mbox{Re }s>0$,
\[
e^{-s} = \frac{1}{2\pi i} \int_{z=x-i\infty}^{x+i\infty} \Gamma(z) s^{-z}dz,
\]
producing for each summand integral,
\[
a_l \frac{1}{2\pi i} \int_{z=x-i\infty}^{x+i\infty}  k\Gamma(kz)(s\sqrt[k]{l})^{-kz}dz = a_l e^{-s\sqrt[k]{l}}
\]
producing the series, $\sum_l a_l e^{-s\sqrt[k]{l}}$. Compare with $(\ref{ln_Z_sum_form})$. This proves $(\ref{ln_Z_jk_tri_zeta})$ is a valid expression for $\ln Z_{j,k}(s)$.

\subsection{\bf $\mathbf{I_{j,k}(w)}$ introduction and its formulation in terms of the Riemann zeta function.}

Define the function $I_{j,k}$ as the step function formed by Perron's formula applied to $\ln Z_{j,k}(s)$,
\begin{equation} \label{I_function_ln_Z}
I_{j,k}(w) = \frac{1}{2\pi i} \int_{s=a-i\infty}^{a+i\infty} \frac{e^{sw}}{s} \ln Z_{j,k}(s) ds,
\end{equation}
valid for $\mbox{Re }s > 0$, $w>0$.

Combining this with the first sum form for $\ln Z_{j,k}(s)$, $(\ref{ln_Z_jk_sum_form})$, $I_{j,k}(w)$ is a step function with value $0$ at $w=0$, with jumps of height $1$ at every $n_i^{j/k}$, $n_i$ a k-free, and in general of height $\frac{1}{m}$ at every $m n_i^{j/k}$, and where its value at each jump is found by averaging the height of the step just above and the step just below--that is in general,
\begin{equation} \label{I_steps}
I_{j,k}(w) = \frac{1}{2} \left( \sum_{m n_i^{j/k} < w} \frac{1}{m}  + \sum_{m n_i^{j/k} \le w} \frac{1}{m} \right)
\end{equation}

Now to develop the expression for $I_{j,k}$, combine $(\ref{I_function_ln_Z})$ with the integral form for $\ln Z_{j,k}(s)$ in ($\ref{ln_Z_jk_tri_zeta}$) to give,
\begin{equation} \label{I_pre_swap}
I_{j,k}(w)= \frac{1}{2\pi i} \int_{s=a-i\infty}^{a+i\infty} \frac{1}{2\pi i} \int_{z=x-i\infty}^{x+i\infty} \frac{e^{s w}}{s} \frac{\zeta(kz+1)\zeta(jz)}{\zeta(jkz)}\frac{k\Gamma(kz)}{s^{kz}} dz ds,
\end{equation}
where $a>0$ and $x>1/j$. To evaluate this double integral, we'll swap the order of integration, using the same approach as the double integral for $\ln Z_{j,k}(s)$. That is, we'll assume the swap is true and then show the result is equal to the pre-swapped form. This approach again helps to highlight the function of the tri-zeta term.

Assuming the swap, we can apply the inverse Laplace transform to the function $s^{-kz-1}$ in the inner integral, noting the following relation (see, for example, \cite{G_R}, Table 17.13),
\[
\frac{\Gamma(kz+1)}{z 2\pi i} \int_{s=a-i\infty}^{a+i\infty} \frac{e^{sw}}{s} s^{-kz} ds = \frac{w^{kz}}{z}
\]
to give for the post-swapped form of $(\ref{I_pre_swap})$,
\begin{equation} \label{I_jk_post_swap}
I_{j,k}^{\verb"~"} = \frac{1}{2\pi i} \int_{z=x-i\infty}^{x+i\infty} \frac{\zeta(kz+1)\zeta(jz)}{\zeta(jkz)}\frac{{ w}^{kz}}{z} dz.
\end{equation}

Applying the single-index sum form of the tri-zeta term from $(\ref{tri_zeta_positive})$ produces 
\[
I_{j,k}^{\verb"~"} = \frac{1}{2\pi i} \int_{z=x-i\infty}^{x+i\infty} \sum_{l=1}^{\infty} a_l l^{-z} \frac{{w}^{kz}}{z} dz.
\]
It is straightfoward to swap this integral and sum, as for all $l>w^k$, the integral vanishes (see for example \cite{Edwards}, section 3.5, for a very similar case). We have
\[
I_{j,k}^{\verb"~"} = \sum_{l=1}^{\infty} a_l \frac{1}{2\pi i} \int_{z=x-i\infty}^{x+i\infty} \left( \frac{w^k}{l}\right) ^z \frac{1}{z} dz
\]
which, by noticing each integral is just a step function with step of height $1/m$ at $w=mn_i^{j/k}$, shows by $(\ref{I_steps})$ that $I_{j,k}^{\verb"~"} = I_{j,k}$, making the fomula in $(\ref{I_jk_post_swap})$ valid for $I_{j,k}$.

\subsection{\bf The convolution exponential, and an overall form for $\mathbf{S_{j,k}(w)}$.}

Differentiating the $S_{j,k}$ step function of $(\ref{S_integral_1})$ with respect to $w$, we get the distributional derivative as a sum of delta distributions at points $v\in\mathbb{M}_{j,k}$, as well as at $v=0$. Taking the Laplace transform in the sense of generalized functions, and integrating over the series of delta distributions produces,

\begin{equation} \label{S_as_Laplace}
S_{j,k}(w) = \int_{0^-}^w \mathcal{L}^{-1} \left( e^{\ln Z(s)} \right)[t]dt - 1.
\end{equation}
Here we note that throughout the paper the $S_{j,k}$ integral's lower limit will be chosen as either $t=0^-$ or $t=0^+$, depending on the context. The difference amounts to whether the point mass at $w=0$ ($\delta_0(w)$) is captured. If it is, as in $(\ref{S_as_Laplace})$, since $0$ is not in $\mathbb{M}_{j,k}$, a $-1$ corrective term is required.

Next note that by $(\ref{I_function_ln_Z})$, $I_{j,k}$ can be written as,
\begin{equation} \label{I_as_Laplace}
I_{j,k}(w) = \mathcal{L}^{-1} \left( \frac{\ln Z_{j,k}(s)}{s} \right),
\end{equation}

and let $dI$ be its distributional derivative,
\[
dI_{j,k} = \mathcal{L}^{-1} \left( \ln Z_{j,k}(s) \right).
\]

Applying the convolution theorem for Laplace transforms, $\mathcal{L}^{-1}(\mathcal{L}f\mathcal{L}g) = f*g$, we have $\mathcal{L}^{-1}((\ln Z)^m) = dI^{*m}$, and we may now write the integrand of $(\ref{S_as_Laplace})$ as a convolution exponential with argument $dI$:
\begin{equation} \label{convo_exp_dI}
\mathcal{L}^{-1} \left( \mbox{exp}{(\ln Z}) \right) = \exp^*(dI),
\end{equation}
where by the definition of the convolution exponential,
\[
\exp^*(dI) = \delta_0+dI+\frac{dI*dI}{2!}+... = \mathcal{L}^{-1} \left( 1+\ln Z+\frac{(\ln Z)^2}{2!}+...\right),
\]
where we've used the shorthand $\delta_c(w) = \delta(w-c)$. Note that $dI$ may be regarded as a discrete measure with support only on $[1,\infty)$. The support of the $k$th convolution power of $dI$, $dI^{*k}$, will then be limited to $[k,\infty)$. This means that for any $w \in \mathbb{R}^+$, the exponential series of $(\ref{convo_exp_dI})$ as a function of $w$ is composed only of finitely many point masses up to and including $\lfloor w \rfloor$. This ensures $(\ref{convo_exp_dI})$ converges and is well-defined, as a counting measure.

The final form for $S_{j,k}$ is,
\begin{equation} \label{S_as_conv_exp}
S_{j,k}(w) = \int_{t=0^+}^{w} \exp^*(dI_{j,k}) dt = \int_{t=0^-}^{w} \exp^*(dI_{j,k}) dt - 1.
\end{equation}

\begin{remark}\label{zeta_parallel_remark}
Note that in parallel with $(\ref{S_integral_1})$ and $(\ref{S_as_conv_exp})$, with $\zeta(s)$ denoting the Riemann zeta function, and $N(w)$ the natural number counting function, a similar though less convenient formula exists relating the two,
\[
N(w) = \frac{1}{2\pi i} \int_{s=a-i\infty}^{a+i\infty} w^s \frac{e^{\ln \zeta(s)}}{s} ds = \int_{t=1^-}^{w} \exp^*(e^v dJ(e^v))[\ln t] \frac{dt}{t},
\]
where $J$ is defined as the step function with jumps of $1/n$ at each $p^n$, $p$ prime (see \cite{Edwards}, section 1.11), and $e^v dJ(e^v)$ produces point masses of weight $1/n$ at each $v=\ln (p^n)$.
\end{remark}

The convolution exponential formula, $(\ref{S_as_conv_exp})$, combined with the formulation $(\ref{I_pre_swap})$ for $I_{j,k}$, now produces an expression for $S_{j,k}$ in terms of the Riemann zeta function. The remainder of the paper involves estimates on $I_{j,k}$.

\section{\bf Developing $\mathbf{I_{j,k}(w)}$}

\subsection{\bf Overview of methods.}

To develop an estimate on $I_{j,k}(w)$, we have two formulas available. One is as a finite sum of k-free counting functions ($Q_k$'s), and the other is the contour integral of $(\ref{I_jk_post_swap})$. Because the contour integral form offers a more comprehensive approach, it will be used in the main estimates for $I_{j,k}(w)$, though the sum-of-$Q_k$'s version will also be used for a general order bound.

\subsubsection*{\bf The function $\mathbf{I_{j,k}(w)}$ as a sum of $\mathbf{Q_k}$'s}

A quick estimate of $I_{j,k}(w)$ can be derived by continuing the parallel between the multiplicative primes' step function $J(w)$ mentioned in Remark $\ref{zeta_parallel_remark}$, and the parallel step function, $I(w)$. That is, just as $J(x)$ is related to $\pi(x)$, the prime counting function, by,
\[
J(x) = \pi(x) + \frac{1}{2} \pi(x^{1/2}) + \frac{1}{3} \pi(x^{1/3}) + ...,
\]
the following relation between $I_{j,k}(w)$ and $Q_k(w)$ (the k-free counting function) is easy to verify,

\begin{equation} \label{I_jk_as_Q_sums}
I_{j,k}(w) = Q_k \left( \left( w \right)^{k/j} \right) + \frac{1}{2}Q_k \left( \left( \frac{w}{2} \right)^{k/j} \right)  + ... = \sum_m \frac{1}{m} Q_k\left( \left( \frac{w}{m} \right)^{k/j} \right),
\end{equation}
where the series terminates when $m>w$.

\begin{remark}\label{Q_k_remark}
A reader only interested in first order approximations to $S_{j,k}(w)$ may at this point skip ahead to section \ref{sum_of_Qs_section}, where a first order approximation to $I_{j,k}(w)$ is derived using just the $Q_k$ series in $(\ref{I_jk_as_Q_sums})$. Any of the subsequent first order approximations derived for $S_{j,k}(w)$ at that point will be valid, based on the developments to this point.
\end{remark}

\subsubsection*{\bf The function $\mathbf{I_{j,k}(w)}$ as a sum of residues}

Examining the integrand of $(\ref{I_jk_post_swap})$, 
\[
\frac{\zeta(kz+1)\zeta(jz)}{\zeta(jkz)}\frac{{w}^{kz}}{z},
\]
let $\rho$ as usual represent the non-trivial zeros of the zeta function. The integrand has poles at $z = 1/j$, $z = \rho/jk$,  $z = 0$, and $z=-2/(jk), -4/(jk), ...$ except for at $z=-2m/(jk)$ when $m=(2q+1)j/2$ or $m=qk$, $m$ and $q$ in $\mathbb{Z}^+$ (where the former equality can only apply when $j$ is even). The pole at the least negative position then is at $z=-2/(jk)$ and the strategy will be to displace the vertical contour of $(\ref{I_jk_post_swap})$ to just skirt the line $x=-2/(jk)$. The Cauchy residue theorem can then be applied to the region between the old and new contours, while an order bound estimate will be found for the new, displaced contour itself (the `remainder integral'). Two fundamentally different cases arise in bounding the remainder integral. When $j=1$ and $k=2$, the remainder integral does not converge absolutely and the most work is needed to find an order bound. For $j=1$, $k \ge 3$, and for $j\ge 2$, $k\ge2$, the remainder integral converges absolutely and an order bound follows easily.

The next sections address expressing $I_{j,k}(w)$ as a sum of residues, with remainder.

\subsection{\bf Residue form for $\mathbf{I_{j,k}(w)}$}

The first step will be to displace the vertical contour of $(\ref{I_jk_post_swap})$ to just skirt the first negative pole, $x=-2/(jk)$, in the $+\mbox{Re}$ direction. Later sections will address application of the Cauchy residue theorem.

To understand the obstacles involved in displacing the vertical contour to $x<0$, we'll have a look at the magnitude of the integrand of $I_{j,k}(w)$'s contour integral, $(\ref{I_jk_post_swap})$, for large $|y|$. To accomplish this, the following reformulation of the integrand of $(\ref{I_jk_post_swap})$ is useful. It follows from the reflection formula for the Riemann zeta function, $\zeta(z) = 2^z \pi^{z-1} \sin(\pi z / 2) \Gamma(1-z) \zeta(1-z)$:
\begin{equation} \label{I_jk_flipped}
I_{j,k}(w) = \frac{1}{2\pi i} \int_{z=x-i\infty}^{x+i\infty} \frac{2}{k}(2\pi)^{(k+j-jk)z} \frac{\cos(kz\pi/2)\sin(jz\pi/2)}{\sin(jkz\pi/2)} \frac{\Gamma(-kz) \Gamma(-jz)}{\Gamma(-jkz)} \frac{\zeta(-kz) \zeta(1-jz)}{\zeta(1-jkz)} \frac{{ w}^{kz}}{z} dz.
\end{equation}

This gives a magnitude for the integrand, after making large $|y|$ approximations to all but the tri-zeta term, of
\begin{equation} \label{I_jk_flipped_large_y}
\approx c_0 |y|^{(jk-j-k)x} |y|^{-3/2} \left|  \frac{\zeta(-kz) \zeta(1-jz)}{\zeta(1-jkz)} \right| w^{kx},
\end{equation}
where $c_0$ is constant for fixed $x$.

The main obstacle to displacing the vertical contour is in bounding the new, `post-reflected' tri-zeta term. The following lemmas are useful:
\begin{lemma}\label{zeta_reciprocal}
The reciprocal of the Riemann zeta function,
\[
\frac{1}{\zeta(z)},
\]
is bounded by a constant along a given vertical contour with $x > 1$ and uniformly in the half-plane $x \ge 1+\delta$, $\delta>0$.
\end{lemma}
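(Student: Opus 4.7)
The plan is to deduce the estimate directly from the Dirichlet series representation of $1/\zeta(z)$. Starting from the Euler product $\zeta(z) = \prod_p (1-p^{-z})^{-1}$ (used repeatedly throughout the paper and valid for $\mbox{Re }z > 1$), the reciprocal admits the absolutely convergent expansion
\[
\frac{1}{\zeta(z)} = \prod_p \left(1-p^{-z}\right) = \sum_{n=1}^\infty \frac{\mu(n)}{n^z},
\]
where $\mu$ is the M\"obius function; the passage from product to series is the standard consequence of absolute convergence in the half-plane $\mbox{Re }z > 1$.

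With $x = \mbox{Re }z$, a termwise absolute-value bound on the series then yields
\[
\left|\frac{1}{\zeta(z)}\right| \le \sum_{n=1}^\infty \frac{|\mu(n)|}{n^x} \le \sum_{n=1}^\infty \frac{1}{n^x} = \zeta(x),
\]
which is finite whenever $x > 1$. This establishes the first claim immediately: on any vertical line $\mbox{Re }z = x > 1$, we have $|1/\zeta(z)| \le \zeta(x)$, a constant depending only on the chosen abscissa. For the uniform statement on the half-plane $\mbox{Re }z \ge 1+\delta$, I would invoke monotonicity of $\zeta$ on $(1,\infty)$ to replace $\zeta(x)$ by $\zeta(1+\delta)$, giving a single constant depending only on $\delta$.

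No serious obstacle arises here; the lemma is essentially a packaging of the classical $\mu(n)$-series bound. The only subtlety worth flagging is that the Dirichlet series for $1/\zeta$ and the Euler product for $\zeta$ converge on precisely the same open half-plane $\mbox{Re }z > 1$, so the dominating series $\sum n^{-x}$ must be handled on the open half-plane (finite for each fixed $x > 1$, but unbounded as $x \to 1^+$)—which is exactly why the uniform statement requires the closed strip $x \ge 1+\delta$ with $\delta > 0$ strict.
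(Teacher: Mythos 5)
Your proof is correct and follows essentially the same route as the paper's: both bound $|1/\zeta(z)|$ termwise via the M\"obius Dirichlet series $\sum_n \mu(n) n^{-z}$ by $\zeta(x)$, and obtain uniformity on $x \ge 1+\delta$ by dominating with $\zeta(1+\delta)$. No issues.
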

\begin{proof}
This follows immediately from the relation,
\[
\frac{1}{\zeta(z)} = \sum_n \frac{\mu(n)}{n^z},
\]
where $\mu(n)$ is the Mobius function, since,
\[
\left| \sum_n \frac{\mu(n)}{n^z}\right| \le \sum_n \frac{| \mu(n) |}{n^x} < \sum_n \frac{1}{n^x} = \zeta(x) < \infty,
\]
whenever $x>1$. The uniformity condition follows from $| \zeta(z) | \le \zeta(1+\delta) $ whenever $x \ge 1+\delta$.
\end{proof}

\begin{lemma} \label{zeta_Lindelhof_bounds}
(see \cite{Edwards}, Section 9.2) The zeta function may be bounded as follows (Lindel\"of),
\begin{equation} \label{zeta_bound_n1}
| \zeta(z) | = \mathcal{O} (\ln|y|) \mbox{ whenever $x\ge 1$, for all $|y|\ge 2$},
\end{equation}
\begin{equation} \label{zeta_bound_n2}
| \zeta(z) | = \mathcal{O} (|y|^{1/2 - x/2 + \epsilon}) \mbox{ whenever $0\le x \le 1$, for all $|y|\ge 1$, $\epsilon>0$},
\end{equation}
\begin{equation} \label{zeta_bound_n3}
| \zeta(z) | = \mathcal{O} (|y|^{1/2 - x + \epsilon}) \mbox{ whenever $x\le 0$, for all $|y| > y_1$ , $\epsilon>0$}.
\end{equation}
\end{lemma}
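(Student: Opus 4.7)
The plan is to handle the three regions in sequence, starting from $x \geq 1$ where $\zeta$ is close to its Dirichlet series, then using the functional equation to jump to $x \leq 0$, then interpolating across the critical strip by the Phragm\'en-Lindel\"of principle.

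For the bound $(\ref{zeta_bound_n1})$ in the region $x\ge 1$, I would use the standard Euler--Maclaurin identity
\[
\zeta(s) = \sum_{n=1}^{N-1} n^{-s} + \frac{N^{1-s}}{s-1} + \tfrac{1}{2} N^{-s} - s\int_{N}^{\infty} \frac{B_1(\{u\})}{u^{s+1}}\,du,
\]
valid for $x>0$, $s\ne 1$ (with $B_1$ the first Bernoulli polynomial), and then take $N = \lfloor |y|\rfloor$. For $x\ge 1$ the partial sum contributes $O(\log|y|)$; the $N^{1-s}/(s-1)$ term is $O(1)$ because $|N^{1-s}|=N^{1-x}\le 1$ and $|s-1|\gtrsim |y|$; and the tail integral is $O(1)$ uniformly. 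This gives the $O(\ln|y|)$ bound.

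For the bound $(\ref{zeta_bound_n3})$ in the region $x\le 0$, I would invoke the reflection formula (already used above in $(\ref{I_jk_flipped})$) in the form $\zeta(z)=\chi(z)\zeta(1-z)$, with $\chi(z)=2^z\pi^{z-1}\sin(\pi z/2)\Gamma(1-z)$. Stirling's formula applied to $\Gamma(1-z)$, together with the exponential decay of $\sin(\pi z/2)$ in one half-plane balancing its growth in the other, yields the standard uniform asymptotic $|\chi(z)|\sim (|y|/2\pi)^{1/2-x}$ for $|y|$ large. Combining with $|\zeta(1-z)|=O(\log|y|)$ from $(\ref{zeta_bound_n1})$ applied to $1-z$ (whose real part is $\ge 1$) delivers $O(|y|^{1/2-x+\epsilon})$.

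For the intermediate bound $(\ref{zeta_bound_n2})$ in $0\le x\le 1$, I would apply Phragm\'en--Lindel\"of to the entire function $F(s)=(s-1)\zeta(s)$ on the vertical strip. On the line $x=1$ the first bound gives $|F|=O(|y|\log|y|)=O(|y|^{1+\epsilon})$, while at $x=0$ the third bound gives $|\zeta(iy)|=O(|y|^{1/2+\epsilon})$ and hence $|F|=O(|y|^{3/2+\epsilon})$. Linear interpolation of the exponent across the strip yields $|F(x+iy)|=O(|y|^{3/2-x/2+\epsilon})$, and dividing by $|s-1|\asymp |y|$ produces the claimed $O(|y|^{1/2-x/2+\epsilon})$.

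The main obstacle is verifying the hypothesis of Phragm\'en--Lindel\"of that $F$ has at most polynomial order on horizontal lines within the strip; this requires a crude a priori bound such as $|\zeta(s)|=O(e^{C|y|})$ uniform on $0\le x\le 1$, which however follows directly from the functional equation and Stirling. Once this growth control is in place the interpolation step is mechanical, and the three estimates assemble into the statement of the lemma.
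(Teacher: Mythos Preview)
Your proof is correct and follows the standard route (Euler--Maclaurin for $x\ge 1$, functional equation for $x\le 0$, Phragm\'en--Lindel\"of interpolation for the strip). Note that the paper itself does not prove this lemma: it is stated with a citation to Edwards, Section~9.2, and no proof is given in the text, so there is no in-paper argument to compare against; your argument is essentially the one found in the cited reference.
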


Applying the lemmas to the tri-zeta term of $(\ref{I_jk_flipped_large_y})$ we have, when $x\le -\delta < 0$,
\[
|\zeta(1-jz)| = \mathcal{O}(1)
\]
and by Lemma $\ref{zeta_reciprocal}$, 
\[
\left| \frac{1}{\zeta(1-jkz)} \right| = \mathcal{O}(1).
\]
Also, by Lemma $\ref{zeta_Lindelhof_bounds}$'s equations $(\ref{zeta_bound_n1})$ and $(\ref{zeta_bound_n2})$, we have, 
\[
|\zeta(-kz)| =
  \left\{
    \begin{array}{ll}
	\mathcal{O} (|y|^{1/2 - k|x|/2 + \epsilon}) & \mbox{ when $-1/k < x \le -\delta$}, \\
	\mathcal{O} (\ln |y|) & \mbox{ when $x \le -1/k$}.
    \end{array}
  \right.
\]
Using $(\ref{I_jk_flipped_large_y})$ and the results just above, the overall bound on the integrand of $(\ref{I_jk_post_swap})$ when $|y|\ge2$ becomes
\begin{equation} \label{disp_bound_low}
\left| \frac{\zeta(kz+1)\zeta(jz)}{\zeta(jkz)} \frac{{ w}^{kz}}{z} \right| =
  \left\{
    \begin{array}{ll}
	\mathcal{O} \left(|y|^{(jk-j-k/2)x -1 + \epsilon} \right) & \mbox{ when $-1/k < x \le -\delta$}, \\
	\mathcal{O} \left(|y|^{(jk-j-k)x - 3/2 + \epsilon} \right) & \mbox{ when $x \le -1/k$}.
    \end{array}
  \right.
\end{equation}

To examine various ranges of $j$ and $k$, recalling the remarks following $(\ref{S_integral_1})$, that $j/k$ be in lowest form:

\begin{description}
\item[$\bf{j=1}$]
When $j=1$, the bound reduces to 
\begin{equation} \label{I_remainder_abv_1_k}
\mathcal{O} \left(|y|^{(k/2-1)x -1 + \epsilon} \right) \mbox{ when $-1/k < x \le -\delta$}
\end{equation}
and 
\begin{equation} \label{I_remainder_bel_1_k}
\mathcal{O} \left(|y|^{-x -3/2 + \epsilon} \right) \mbox{ when $x \le -1/k$}.
\end{equation}

Notice
\begin{enumerate}
\item By $(\ref{I_remainder_bel_1_k})$, vertical contours have no chance of convergence if $x\le -3/2$.
\item Also by $(\ref{I_remainder_bel_1_k})$, vertical contours won't converge absolutely if $x \le -1/2$. And if additionally $k=2$, then by equation $(\ref{I_remainder_abv_1_k})$, vertical contours won't converge absolutely if $x\in (-1/2,-\delta]$ either, making the case $j=1$, $k=2$ unique since vertical contours won't converge absolutely whenever $x<0$.
\item If $k\ge3$, equation $(\ref{I_remainder_bel_1_k})$ shows the vertical contour will converge absolutely whenever $-1/2+\epsilon < x \le -1/k$. Specifically, notice $x=-c_c/(jk)$, where $c_c = 3/2 - \epsilon_0$, for some suitably chosen $\epsilon_0$, lies in this region of absolute convergence.
\end{enumerate}

\item[$\bf{j\ge2}$]
In general, the bound to ensure absolute convergence when in $-1/k<x<-\delta$ is
\begin{equation} \label{I_a_remainder_abv_1_k}
x < \frac{-\epsilon}{jk-j-k/2},
\end{equation}
and notice that since $jk-j-k \ge 1$ when $j\ge 2$ ($k \ge 2$), the bound when $x \le -1/k$ is,
\begin{equation} \label{I_a_remainder_bel_1_k}
\mathcal{O} \left( |y|^{x -3/2 + \epsilon} \right).
\end{equation}
So by $(\ref{I_a_remainder_abv_1_k})$, we again have absolute convergence along the contour $x=-c_c/(jk)$, $c_c$ as above.

\end{description}

\subsubsection*{\bf Displacing the vertical contour.}

To develop the displaced contour in all cases of $j,k$, let $C_r$ be a contour with $x$ fixed in $1/j<x$ and $|y|<h$, let $C_l$ be the contour with $x=-c_c/(jk)$ and $|y|<h$, and let contours $C_t$ and $C_b$ be the segments at $y=h$ and $y=-h$ that complete the rectangle initiated by $C_r$, $C_l$. See Figure~\ref{fig:vertical_displace_contours}.

\begin{figure}[h]
\centerline{\includegraphics{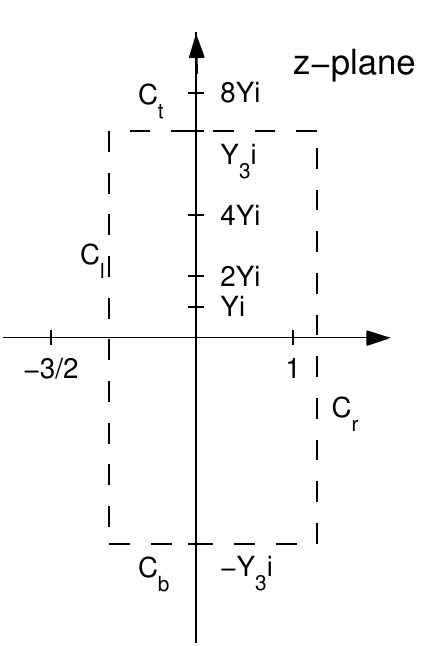}}
\caption{\emph{Displacing the $I_{j,k}(w)$ vertical contour.}}
\label{fig:vertical_displace_contours}
\end{figure}

To justify shifting the vertical contour, we need to show that the integrand of $(\ref{I_jk_post_swap})$ along contours $C_t$ and $C_b$ goes to zero as $h\rightarrow \infty$. The largest issue is handling the denominator of the tri-zeta term. For example, one of the problems with bounding the reciprocal of the zeta function along horizontal contours through the critical strip is the risk of passing arbitrarily close to a zeta zero. However, there exist certain safe lines of passage through this region as the following Lemma guarantees.

\begin{lemma} \label{zeta_reciprocal_Bartz}
(See \cite{Bartz}, Lemma 1). There exist positive constants $\lambda_1$, $\lambda_2$, and $y_0$ such that for any $Y \ge y_0$, we can find a $y$ between $Y$ and $2Y$ so that,
\[
\left| \frac{1}{\zeta(x+iy)} \right| \le \lambda_2 \ln^{\lambda_1}y \mbox{ for $-1\le x \le 3$}.
\]
\end{lemma}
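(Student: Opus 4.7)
The plan is the standard ``good horizontal line'' argument combined with the partial fraction expansion of $\zeta'/\zeta$, balancing separation from nearby zeta zeros against their total count.

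First, I would invoke the Riemann--von Mangoldt zero count $N(T+1) - N(T) = O(\log T)$ to bound the number of zeta zeros $\rho = \beta + i\gamma$ with $Y \le \gamma \le 2Y$ by $O(Y \log Y)$. A pigeonhole/measure argument then shows that, for a suitable exponent $A$, there exists $y \in [Y,2Y]$ with $|y - \gamma| \ge (\log Y)^{-A}$ for every zero: the measure of the ``bad'' set where $y$ lies within $(\log Y)^{-A}$ of some $\gamma$ is at most $O\bigl(Y (\log Y)^{1-A}\bigr)$, which is negligible compared with $Y$ once $A > 1$.

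Second, for such a $y$ I would use the partial fraction decomposition arising from the Hadamard product for $\xi$,
\[
\frac{\zeta'}{\zeta}(s) \;=\; \sum_{|\gamma - t| \le 1} \frac{1}{s - \rho} \;+\; O(\log|t|),
\]
valid for $-1 \le \sigma \le 3$ and $|t| \ge 2$. The sum has only $O(\log y)$ terms, and by the separation property each contributes at most $O((\log y)^A)$, giving $|\zeta'/\zeta(x+iy)| = O((\log y)^{A+1})$ uniformly on $-1 \le x \le 3$. Integrating horizontally from $x = 3$, where $|\log \zeta(3+iy)| = O(1)$ via the Dirichlet series, to the target $x$,
\[
\log \zeta(x+iy) \;=\; \log \zeta(3+iy) \;+\; \int_{3}^{x} \frac{\zeta'}{\zeta}(u+iy)\,du,
\]
and taking real parts yields an upper bound on $-\log|\zeta(x+iy)|$, hence on $|1/\zeta(x+iy)|$.

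The main obstacle is sharpening the naive output of the integration step, which is only $|1/\zeta(x+iy)| \le \exp\bigl(C(\log y)^{A+1}\bigr)$, down to the pure polylogarithmic bound $\lambda_2 (\log y)^{\lambda_1}$ claimed in the lemma. Closing this gap requires a more delicate interplay between the separation exponent $A$ and finer zero-density estimates than the Riemann--von Mangoldt count, together with Borel--Carath\'eodory-type control on $\log \zeta$ in the critical strip to convert local multiplicative behavior near each $\rho$ into a polylog bound rather than an exponential of polylog. I would defer this optimization to Bartz's original treatment.
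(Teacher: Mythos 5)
There is a genuine gap here, and to your credit you name it yourself in the final paragraph: the standard machinery you assemble (Riemann--von Mangoldt count, pigeonhole to get $|y-\gamma|\ge(\log Y)^{-A}$ for all zeros, the partial-fraction formula for $\zeta'/\zeta$, and horizontal integration from $x=3$) delivers only $|1/\zeta(x+iy)|\le\exp\bigl(C(\log y)^{A+1}\bigr)$, which is exponentially weaker than the claimed $\lambda_2\ln^{\lambda_1}y$. This is not a routine optimization that can be deferred: even the sharpest version of the classical ``good horizontal line'' argument (averaging $\sum_{|\gamma-t|\le1}\log|t-\gamma|^{-1}$ over $t$, as in Titchmarsh, Theorem~9.7) produces a bound of the shape $|1/\zeta(\sigma+it)|\le t^{C}$, i.e.\ a \emph{power} of $y$, because the unavoidable $O(\log t)$ error in the partial-fraction formula integrates to an $O(\log y)$ contribution to $-\log|\zeta|$. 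Reaching a purely polylogarithmic bound requires the specific construction in Bartz's Lemma~1, not merely a better choice of the separation exponent $A$ or finer zero-density input within your framework. So the sketch, as written, does not prove the stated lemma.

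For comparison: the paper offers no proof of this lemma at all; it is imported verbatim as Lemma~1 of \cite{Bartz}. In that sense your closing deferral to Bartz's treatment puts you in the same position as the paper, and the real content of your proposal is a (correct) reconstruction of the general strategy together with an accurate diagnosis of exactly where the elementary argument falls short. If you intend to present this as a proof rather than a citation, the missing step is precisely the mechanism by which Bartz converts the local behavior near each zero into a polylogarithmic rather than exponential-of-polylogarithmic loss; absent that, the honest course is to state the lemma with the citation, as the paper does.
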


To allow use of the lemma, define the sequence $\{ Y_m \}$ as the sequence of positive real values guaranteed by the lemma where we will choose $Y > \max \{y_0, y_1, 2 \}$ (with $y_0$ as required by the lemma, and, as we'll see, the values $y_1$ and $2$ as required by Lemma $\ref{zeta_Lindelhof_bounds}$) and each $Y_m \in (2^{m-1} Y,2^{m} Y)$ so that the Lemma's bound on $1/ \zeta(jkz)$ holds for all $Y_m$, independent of $m$.

We'll split the horizontal contour $C_t$ (the contour $C_b$ is treated similarly), into three sections and show how to make each section's contribution disappear as $|y|$ grows large. The sections are: $-c_c/(jk) < x \le -\delta$ , $-\delta < x < 1/(jk) + \delta$, and $1/(jk)+\delta \le x < 1/j + \delta$ (where, by definition, the original vertical contour integral of $(\ref{I_jk_post_swap})$ was independent of the value of $x$, provided $x>1/j$).

\medskip
\noindent
\emph{The section $-\delta < x < 1/(jk) + \delta$.}

This `middle' section requires the most care and will be dealt with first. To keep the $1/\zeta(jkz)$ term in the integrand of $(\ref{I_jk_post_swap})$ under control, we'll invoke Lemma $\ref{zeta_reciprocal_Bartz}$. To do so, we position the contours $C_t$ and $C_b$ at $\pm (Y_m/(jk)) i$ so the lemma's bounds apply. We'll also use the bounds for the zeta function from equations $(\ref{zeta_bound_n1})-(\ref{zeta_bound_n3})$.

For the $|1/\zeta(jkz)|$ term, Lemma $\ref{zeta_reciprocal_Bartz}$ produces
\begin{equation} \label{zeta_jkz_recip_bound}
\left| \frac{1}{\zeta(jkz)} \right| < c_0 ln^{c_1} (Y_m) \mbox{ for $-1/(jk) \le x \le 3/(jk)$}.
\end{equation}
(Notice the lower limit for $x$ in $(\ref{zeta_jkz_recip_bound})$ is $-1/(jk)$, requiring, at least, $\delta \le 1/(jk)$.)

For $|\zeta(kz+1)|$, the bound of equation $(\ref{zeta_bound_n2})$ gives
\[
|\zeta(kz+1)| = \mathcal{O} (Y_m^{1/2-(1+k\delta)/2+ \epsilon}),
\]
provided $\delta < 1/k$, and the bound $(\ref{zeta_bound_n3})$ gives
\[
|\zeta(jz)| = \mathcal{O}( Y_m^{1/2 + \delta j}).
\]

Overall then,
\[
\left| \frac{\zeta(kz+1)\zeta(jz)}{\zeta(jkz)} \frac{{ w}^{kz}}{z} \right| = \mathcal{O} \left( ln^{c_1} (Y_m) Y_m^{-1/2+\delta(k/2+j) +\epsilon} \right),
\]
and it suffices to make
\[
\delta < \frac{1}{3k+6j},
\]
which produces an overall bound on the integrand of
\begin{equation} \label{disp_bound_mid}
\left| \frac{\zeta(kz+1)\zeta(jz)}{\zeta(jkz)} \frac{{ w}^{kz}}{z} \right| =\mathcal{O}\left(Y_m^{-1/3+\epsilon_1}\right)
\end{equation}
provided (combining all restrictions on $\delta$),
\[
\delta < \min \left\{ \frac{1}{k}, \frac{1}{jk}, \frac{1}{3k+6j} \right\} = \min \left\{ \frac{1}{jk}, \frac{1}{3k+6j} \right\}.
\]

\medskip
\noindent
\emph{The section $-c_c/(jk) \le x \le -\delta $.} 

This section follows from equations $(\ref{disp_bound_low})$ and the worst case scenario at $j=1$, $k=2$, for an overall bound of

\[
\left| \frac{\zeta(kz+1)\zeta(jz)}{\zeta(jkz)} \frac{{ w}^{kz}}{z} \right| =\mathcal{O}\left(Y_m^{-3/4+\epsilon}\right)
\]

\medskip
\noindent
\emph{The section $1/(jk) + \delta \le x < 1/j + \delta $.}

Since in this section
\[
|\zeta(jz)| = \mathcal{O} (Y_m^{1/2+\epsilon}),
\]
by Lemma $\ref{zeta_Lindelhof_bounds}$, we have the bound,
\begin{equation} \label{disp_bound_high}
\left| \frac{\zeta(kz+1)\zeta(jz)}{\zeta(jkz)} \frac{{ w}^{kz}}{z} \right| = \mathcal{O} (Y_m^{-1/2+\epsilon}).
\end{equation}

\medskip
Combining the bounds $(\ref{disp_bound_mid})$-$(\ref{disp_bound_high})$ on the integrand of $(\ref{I_jk_post_swap})$ shows that the integral along finite contours $C_t$ (and $C_b$) vanishes as $Y_m \rightarrow \infty$, justifying the displacement of the vertical contour of $(\ref{I_jk_post_swap})$ to $x=-c_c/(jk)$, \emph{provided} the displaced integral itself converges. That is, it has been shown,
\begin{equation} \label{I_displaced_contour}
\frac{1}{2\pi i} \int_{z=x_h-i\infty}^{x_h+i\infty} \frac{\zeta(kz+1)\zeta(jz)}{\zeta(jkz)}\frac{{ w}^{kz}}{z} dz = \sum \mbox{Res }f(z) +  \frac{1}{2\pi i} \int_{z=x_l-i\infty}^{x_l+i\infty} \frac{\zeta(kz+1)\zeta(jz)}{\zeta(jkz)}\frac{{ w}^{kz}}{z} dz,
\end{equation}
provided at least the improper integral on the right hand side converges, and where $x_l = -c_c/(jk)$ (just skirting the $+\mbox{Re}$ side of the pole at $-2/(jk)$), $1/j<x_h$, and $f(z)$ represents the integrand of $(\ref{I_jk_post_swap})$.

\subsubsection*{\bf Residue form of $\mathbf{I_{j,k}(w)}$}

The residues in equation $(\ref{I_displaced_contour})$ are easily computed:
\begin{equation} \begin{split} \label{I_residue}
I_{j,k}(w) = \frac{\zeta(1+k/j)}{\zeta(k)} w^{k/j}& + \lim_{m\to\infty}\stackrel{*}{\sum_{|\mbox{Im }\rho|<Y_m/(jk)}} \frac{\zeta(\rho/j+1)\zeta(\rho/k)} {\zeta ^\prime (\rho)}\frac{{ w}^{\rho/j}}{\rho} + \ln w + \gamma - j\left( 1- \frac{1}{k} \right) \ln 2\pi \\
&+ \frac{1}{2\pi i} \int_{z=-c_c/(jk)-i\infty}^{-c_c/(jk)+i\infty} \frac{\zeta(kz+1)\zeta(jz)}{\zeta(jkz)}\frac{{ w}^{kz}}{z} dz,
\end{split} \end{equation}
where $\gamma$ is the Euler-Mascheroni constant. Two remarks are in order about the sum over zeta zeros:
\begin{enumerate}
\item The $Y_m$ sequence for the sum \emph{is} needed--it guarantees (by Lemma $\ref{zeta_reciprocal_Bartz}$) the limit implicit in the sum will converge to the proper value. 
\item The summands in the zeta zero sum only represent those for \emph{simple} zeros. The full expression for a zeta zero of multiplicity $l_{\rho}$ is:
\[
\frac{1}{(l_{\rho}-1)!} \frac{d^{l_{\rho}-1}}{dz^{l_{\rho}-1}} \left. \left( (z-\rho/jk)^{l_{\rho}} \frac{\zeta(kz+1)\zeta(jz)}{\zeta(jkz)}\frac{{ w}^{kz}}{z} \right) \right|_{z=\rho/jk}.
\]
It is not known whether all the zeta zeros are simple, though the first $1.5e9$ are (see \cite{VanDeLune}). The $*$ above the $\Sigma$ summation symbol will indicate this simplification.

\end{enumerate}

For equation $(\ref{I_residue})$ to be valid, it remains to show the remainder integral converges which we'll do next.

\subsection{\bf Evaluating the remainder integral, in general.}

To ensure convergence of the remainder integral in $(\ref{I_residue})$, namely,
\begin{equation} \label{I_remainder}
\frac{1}{2\pi i} \int_{z=x-i\infty}^{x+i\infty} \frac{\zeta(kz+1)\zeta(jz)}{\zeta(jkz)}\frac{{ w}^{kz}}{z} dz,
\end{equation}
we'll evaluate it along $x=-c_c/(jk)$, recalling $c_c = 3/2-\epsilon_0$. There are two distinct cases. By equations $(\ref{I_remainder_abv_1_k})$ - $(\ref{I_a_remainder_bel_1_k})$ and the surrounding discussion, we know $(\ref{I_remainder})$ converges absolutely along $x=-c_c/(jk)$ for all allowed combinations of $j$ and $k$ \emph{except} the instance $j=1$, $k=2$. So the cases are:

\begin{description}
\item[$\bf{j\ge1, k\ge3}$]
In this case, the integral $(\ref{I_remainder})$ converges absolutely along $x=-c_c/(jk)$ and therefore has order bound,
\begin{equation} \label{non_1_2_order_bound}
\frac{1}{2\pi i} \int_{z=x-i\infty}^{x+i\infty} \frac{\zeta(kz+1)\zeta(jz)}{\zeta(jkz)}\frac{{ w}^{kz}}{z} dz = \mathcal{O}\left( w^{-c_c/j} \right).
\end{equation}
Note this order bound is not necessarily uniform over $j$ and $k$. Also note the order bound is about the best we can do since it can't be improved by extending to a contour with $x<-2/(jk)$ because of the pole at $x=-2/(jk)$.

\item[$\bf{j=1, k=2}$]
This case requires considerably more work. The approach will be to use the post-reflected form of the integrand from equation $(\ref{I_jk_flipped})$,
\begin{equation} \label{tri_zeta_neg_1_2}
\frac{\zeta(-2z) \zeta(1-z)}{\zeta(1-2z)} \frac{1}{2z} \frac{\cos(z\pi)}{\cos(\frac{z\pi}{2})} \Gamma(-z)  (\sqrt{2\pi} w)^{2z},
\end{equation}
and expand the tri-zeta term as a sum then evaluate the resulting sum of integrals using steepest descent. The next sections develop this approach.
\end{description}

\subsection{\bf Evaluating the remainder integral, for the case $\mathbf{j=1, k=2}$.}
The approach is by divide-and-conquer, expressing the tri-zeta term of $(\ref{tri_zeta_neg_1_2})$ as a series, then evaluating this series of integrals, and finally showing the integral of the series, and the series of integrals are equivalent.

To develop an expansion for the negative tri-zeta term of equation $(\ref{tri_zeta_neg_1_2})$, we'll use the relation (see \cite{Wolfram_1}),
\[
\frac{\zeta(z-1)}{\zeta(z)} = \sum_{n=1}^{\infty} \frac{\phi(n)}{n^z}
\]
which holds when $\mbox{Re }z = x > 2$ and where $\phi$ is the Euler totient function.

This gives 
\[
\frac{\zeta(-2z)\zeta(1-z)}{\zeta(1-2z)} = \zeta(1-z) \sum_{n=1}^{\infty} \frac{\phi(n)}{n^{1-2z}}
\]
whenever $x<-\frac{1}{2}$. Expanding the $\zeta(1-z)$ term in its Dirichlet series gives the double sum of two absolutely convergent series:
\[
\sum_{n,k} \frac{\phi(n)}{k^{1-z}n^{1-2z}} = \sum_{n,k} \frac{n\phi(n)}{(kn^2)^{1-z}},
\]
where the reformulation on the right hand side has been made in the aim of expressing this as a sum in a single index. As discussed, any $j$ in $\mathbb{Z}^+$ can be factored uniquely as $a b^2$, where $a$ is a square-free number. Let $\mu_2(k)$ as above be the square-free indicator function. To consolidate terms in the double sum, consider the coefficient of the term $\mu_2(k)/(kn^2)^{1-z}$. For $j=kn^2$, $k$ square-free, if we remove the restriction on $k$ being square-free, note $j$ can be written as $k_1 n_1^2$ once for every divisor of $n$ (for example, if $j=6\cdot10^2$, $6$ square-free, it can also be written as $24\cdot5^2$, $150\cdot2^2$, and $600\cdot1^2$). This allows reformulating the double sum as,
\[
\sum_{n,k} \frac{\mu_2(k) \sum_{d|n} d\phi(d)}{(k n^2)^{1-z}}.
\]

So we have,
\begin{equation} \label{hmj_v_zeta}
\frac{\zeta(-2z)\zeta(1-z)}{\zeta(1-2z)} = \sum_{l=1}^{\infty} \frac{ h_l }{ l^{1-z} }, \mbox{ provided $\mbox{Re }z < -1/2$},
\end{equation}
where $l$ has been factored uniquely as $l=n m^2$, $n$ square free, and where 
\[
h_l = \sum_{d|m} d\phi(d).
\]
The sum is guaranteed to converge as the rearrangement of terms in the product of two absolutely convergent sums.

Now noting that $(\ref{I_jk_flipped})$ simplifies with $j=1$ and $k=2$, and applying the infinite sum reformulation of the tri-zeta of $(\ref{hmj_v_zeta})$, we'll exchange the order of summation and integration in $(\ref{I_jk_flipped})$ to get,
\begin{equation} \label{I_sum_over_j}
\sum_{l=1}^{\infty} \frac{1}{2 \pi i} \int_{z=x-i\infty}^{x+i\infty}  \frac{ h_l }{ l^{1-z} } \frac{1}{2z} \frac{\cos(z\pi)}{\cos(\frac{z\pi}{2})} \Gamma(-z) (\sqrt{2\pi} w)^{2z} dz,
\end{equation}
with the goal of finding a bound on it and showing the order of integration and summation can be swapped to prove its equivalence to $(\ref{I_remainder})$.

To evaluate the $l^{th}$ summand of $(\ref{I_sum_over_j})$, choose $M\ge\frac{3}{2}$, and split the contour into $C_1 \equiv \{ x+iy \mid | y | \le 2M \}$, $C_2 \equiv \{ x+iy \mid y \ge 2M \}$ and  $C_3 \equiv \{ x+iy \mid y \le -2M \}$.

The contour $C_1$ has a straightforward absolute bound, 
\[
\left| \frac{1}{2 \pi i} \int_{C_1} \frac{ h_l }{ l^{1-z} } \frac{1}{2z} \frac{\cos(z\pi)}{\cos(\frac{z\pi}{2})} \Gamma(-z) (\sqrt{2\pi} w)^{2z} dz \right| <
\]
\begin{equation} \label{middle_portion_hmj}
\frac{1}{2 \pi} \int_{C_1} \left| \frac{ h_l }{ l^{1-z} } \frac{1}{2z} \frac{\cos(z\pi)}{\cos(\frac{z\pi}{2})} \Gamma(-z) (\sqrt{2\pi} w)^{2z} \right| dz = c_0 \frac{ h_l }{ l^{1-x} w^{-2x}} 
\end{equation}

For contours $C_2$ and $C_3$ we'll expand the integrand in this region asymptotically. Generalize the lower limit to $2M \le |y_0|$ to give, in the case of $C_2$,
\begin{equation} \label{hmj_simpler_integrand}
\frac{ h_l }{ l } \frac{1}{4 \pi i} \int_{x+y_0i}^{x+i\infty} \frac{1}{z} \frac{\cos(z\pi)}{\cos(\frac{z\pi}{2})} \Gamma(-z) (2\pi l w^2)^z dz,
\end{equation}
and similarly for $C_3$. 

The asymptotic expansion for the gamma function (see, eg, \cite{Abramowitz}) and an expansion of the cosine fraction in terms of its component exponentials gives for those terms,
\[
\Gamma(-z) \frac{\cos(z\pi)}{\cos(\frac{z\pi}{2})} = \sqrt{2 \pi} i e^z e^{i\frac{\pi}{2}z} z^{-(z+\frac{1}{2})} R_0 \left( 1+ R_1(z) + R_2(z) \right) \mbox{   when $y \ge \delta > 0$}.
\] 
Assuming $\mbox{Im }z \ge M$, $R_0$ is the correction factor from the gamma approximation and may be bounded by $e^{c_1/M}$, while $R_1$ and $R_2$ arise from the cosine fraction expansion and may be bounded by $e^{-c_2/M}$.

We can now rewrite $(\ref{hmj_simpler_integrand})$ as, 
\begin{equation} \label{asymptote_integrand}
\frac{ h_l }{ l } \frac{1}{2\sqrt{2 \pi}} \int_{x+y_0i}^{x+i\infty} e^{i\frac{\pi}{2}z} z^{-(z+\frac{3}{2})} (2\pi l w^2 e)^z R_0(1+R_1(z)+ R_2(z)) dz.
\end{equation}

\subsection{\bf Steepest descent in general for the $\mathbf{j=1}$, $\mathbf{k=2}$ case.}

We now evaluate the integral of equation $(\ref{asymptote_integrand})$, ignoring for the moment its remainder terms:
\begin{equation} \label{jth_summand_asymptote}
\frac{ h_l }{ l } \frac{1}{2\sqrt{2 \pi}} \int_{x+iy_0}^{x+i\infty} e^{i\frac{\pi}{2}z} z^{-(z+\frac{3}{2})} (2\pi l w^2 e)^z dz .
\end{equation}

An estimate of this integral is straightforward by use of the method of steepest descent. The change of variables $s = \frac{z}{l w^2}$ in $(\ref{jth_summand_asymptote})$ produces 
\begin{equation} \label{s_form} 
\frac{ h_l }{ l^{\frac{3}{2}} w} \frac{1}{2\sqrt{2 \pi}} \int_{s=\frac{c}{l w^2}+\frac{iy_0}{lw^2}}^{\frac{c}{l w^2}+i\infty} s^{-\frac{3}{2}} \exp\left[ l w^2 s \left( \frac{i\pi}{2} + 1 + \ln \left(\frac{2\pi}{s} \right) \right) \right] ds
\end{equation}
where the integral is now of saddle point form 
\begin{equation} \label{standard_SD}
K(\lambda) = \int_C g(s) e^{\lambda f(s)} ds.
\end{equation}

The saddle points of $(\ref{s_form})$ are found by solving $f^{\prime}(s)=0$ to give a unique saddle point, $s_0 = 2\pi i$; by using known evaluation formulas for $(\ref{standard_SD})$ when $|\lambda|= l w^2$ is large (see, for example, \cite{Hassani}), equation $(\ref{s_form})$ evaluates to
\begin{equation} \label{SP_estimate}
\approx -\frac{h_l}{l^2} \frac{1}{4\pi w^2} e^{i 2\pi l w^2 }
\end{equation}
to a first approximation whenever $0 \le y_0 << 2\pi l w^2$ (since the contour can be deformed to pass through the saddle point without modification because the integrand has no intervening singularities).

Combining $(\ref{SP_estimate})$ with the expectation that $(\ref{s_form})$ is $\approx 0$ when $2\pi l w^2 << y_0$, suggests that the individual integrals as well as the overall sum of $(\ref{I_sum_over_j})$ at least converge, and that with some additional requirements on the rate of convergence in $z$ of the integrals, that exchanging the order of integration and summation is justified. The next section will show that this is the case.

\subsection{\bf Steepest descent in particular for the $\mathbf{j=1}$, $\mathbf{k=2}$ case.}

To find precise bounds on $(\ref{jth_summand_asymptote})$, consider its integrand:

\begin{equation} \label{jth_summand_absolute}
\left| \frac{ h_l }{ l } \frac{1}{2\sqrt{2 \pi}} e^{i\frac{\pi}{2}z} z^{-(z+\frac{3}{2})} (2\pi l w^2 e)^z \right| = \frac{ h_l }{ l } \frac{1}{2\sqrt{2 \pi}} r^{-3/2} e^{y(\theta - \pi/2)} \left( \frac{2\pi l w^2 e}{r} \right) ^x
\end{equation} 
where $z=x+iy=re^{i\theta}$, and the saddle point is $z=i \alpha_l = 2\pi i l w^2$. 

The integral $(\ref{jth_summand_asymptote})$ can be divided into three cases, depending on where the lower limit $x+i y_0$ lies with respect to the saddle point $i \alpha_l$. Bounds can be found for each case, by evaluating the integral of the magnitude of the integrand where knowledge of the gradient is used in deforming the contour to provide a suitable bound. The computations to find the bounds are not difficult but involve some algebra and have been put in the Appendix because of their length. Assuming $w$ is fixed and greater than zero and that $2M \le y_0$, the three cases and their bounds are as follows:

\begin{description}

\item[Case I] In this case, $\alpha_l e < y_0$. The bound is,
\begin{equation} \label{case_I_bound}
\frac{h_l}{2\sqrt{2\pi}l} \left( c^{|x|} e^{|x|} |x| (\alpha_l e)^{-3/2} + \frac{3}{2} y_0^{-3/2} \left( 1-e^{-(2/3)y_0} \right) + \frac{2}{\ln 2} y_0^{-3/2} e^{-y_0 (\pi/4+\ln(\sqrt{2}))}  \right)
\end{equation}
\[
= \mathcal{O}\left( \frac{h_l}{l^{5/2} w^3} \right),
\]
which follows from Case I's requirement that $2\pi l w^2 e < y_0$.
\item[Case II] In this case, $\alpha_l < y_0 \le \alpha_l e$. The bound is 
\begin{equation} \label{case_II_part_bound}
\frac{h_l}{2\sqrt{2\pi}l} \left( c^{|x|} e^{|x|} |x| (\alpha_l)^{-3/2} + \sqrt{\frac{\pi}{2}}y_0^{-1} +  \frac{3}{2} y_0^{-3/2} \left( 1-e^{-(2/3)y_0}  \right) + \frac{2}{\ln 2} y_0^{-3/2} e^{-y_0 (\pi/4+\ln(\sqrt{2}))} \right),
\end{equation}
The overall order bound for this Case is,
\[
=\mathcal{O}\left( \frac{h_l}{l^2 w^2} \right),
\]
which similarly follows because of Case II's requirement $2\pi l w^2 < y_0$. 

\item[Case III] In this case, $y_0 \le \alpha_l$. This bound is formed by summing three parts. For the first, we'll evaluate
\[
\frac{h_l}{2\sqrt{2\pi}l} \left( \left( \frac{\alpha_l e}{y_1 - x} \right)^{x} (y_1 + x)^{-3/2} e^{-x} |x| + \frac{\sqrt{\pi}}{2^{1/4}} y_1^{-1-x} {\alpha_l}^{x} \right)
\]
at $y_1 = y_0$, then add to the result the same expression evaluated at $y_1 = \alpha_l$ . For the second, we have,
\[
\frac{h_l}{2\sqrt{2\pi}l} \left( 2 e^{M \pi/2} \left( \frac{\alpha_l e}{\sqrt{2}M} \right)^{-M} \left( \frac{1}{\sqrt{y_0-M}} - \frac{1}{\sqrt{\alpha_l-M}}\right) \right).
\]
And for the third, we use the bounds from Case II, setting $y_0=\alpha_l$ in $(\ref{case_II_part_bound})$.
Given $M = 3 / 2$, the overall order bound for this case is
\[
\mathcal{O}\left( \frac{h_l}{l^{\mu} w^{\nu}} \right)
\]
where $\mu = 1-x$ and $\nu = -2x$ if $-1 < x < -1/2$ and $\mu = 2$, $\nu = 2$ otherwise.
\end{description}

The goal was to bound the asymptotic expansion of $(\ref{hmj_simpler_integrand})$ along the $|y|\ge2M$ contours $C_2$ and $C_3$. We have bounded it with the Case bounds, the bounds on the remainder terms (which at worst affect the Case bounds by a constant of scale), and the fact though the Case bounds were derived for the $+\mbox{Im}$ contour ($C_2$), the function is analytic, so the bounds need only be multiplied by a factor of at most $2$ to accomodate a bound on the sum of the symmetric contours.

Given the goal was to evaluate the remainder integral $(\ref{asymptote_integrand})$ along $x=-c_c/(jk) = -3/4 + \epsilon$ (recalling $c_c = 3/2-\epsilon_0$), it is time to prove the following theorem:

\begin{theorem} \label{I_remainder_thm}
The remainder integral, $(\ref{I_remainder})$, when $j=1$ and $k=2$ converges and obeys order bound,
\begin{equation} \label{I_remainder_w_order_bound}
\frac{1}{2 \pi i} \int_{z=x-i\infty}^{x+i\infty} \frac{\zeta(2z+1) \zeta(z)}{\zeta(2z)} \frac{{ w}^{2z}}{z} dz = \mathcal{O} \left( w^{-3/2+\epsilon} \right),
\end{equation}
for $x = -3/4 + \epsilon$.
\end{theorem}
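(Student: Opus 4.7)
The plan is to start from the post-reflected form $(\ref{tri_zeta_neg_1_2})$ of the integrand, placed on the vertical contour $x=-3/4+\epsilon$. This contour lies in the region $x<-1/2$, so the Dirichlet series expansion $(\ref{hmj_v_zeta})$ for the tri-zeta factor converges absolutely, and the remainder integral may be rewritten term-by-term as the sum of integrals $(\ref{I_sum_over_j})$. I would first derive an $l$-uniform bound for each summand and only at the end appeal to those bounds, via dominated convergence on the partial sums, to justify the interchange of sum and integral.

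For the $l$-th summand I split the contour as in the text: a bounded piece $C_1 = \{x+iy : |y|\le 2M\}$ with $M=3/2$, and the two tails $C_2, C_3$. On $C_1$ the triangle inequality gives bound $(\ref{middle_portion_hmj})$, which at $x=-3/4+\epsilon$ reads $\mathcal{O}\bigl(h_l/(l^{7/4-\epsilon}w^{3/2-2\epsilon})\bigr)$. On $C_2$ and $C_3$ I insert Stirling's formula for $\Gamma(-z)$ together with the exponential expansion of $\cos(z\pi)/\cos(z\pi/2)$ to reach the form $(\ref{asymptote_integrand})$; the Stirling and cosine remainder factors $R_0(1+R_1+R_2)$ are uniformly bounded for $|y|\ge 2M$, so they only scale the final bounds by a constant and I may work with the principal part $(\ref{jth_summand_asymptote})$.

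The three steepest-descent cases then furnish the pointwise bounds $(\ref{case_I_bound})$, $(\ref{case_II_part_bound})$, and the Case III estimate. The saddle point sits at $i\alpha_l = 2\pi i l w^2$, so for fixed $w$ and the fixed $y_0 \ge 2M$ chosen above the indices split into Case I for $l < y_0/(2\pi e w^2)$, Case II for a short transitional range, and Case III for $l \ge y_0/(2\pi w^2)$. Evaluating the Case I and Case II estimates at $x=-3/4+\epsilon$ gives $\mathcal{O}\bigl(h_l/(l^{5/2}w^3)\bigr)$ and $\mathcal{O}\bigl(h_l/(l^2 w^2)\bigr)$ respectively, which for large $w$ are \emph{better} than the target rate and whose ranges of applicability in $l$ in fact shrink to be empty once $w$ is sufficiently large. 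Case III, which covers the bulk of the sum for large $w$, yields exactly $\mathcal{O}\bigl(h_l/(l^{7/4-\epsilon}w^{3/2-2\epsilon})\bigr)$, matching the $C_1$ contribution.

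To finish, I sum these bounds in $l$. The key input is that $\sum_l h_l/l^s$ is absolutely convergent for $\mbox{Re}\, s > 3/2$, which is immediate from the Dirichlet series identity $(\ref{hmj_v_zeta})$ as a product of the absolutely convergent series for $\zeta(1-z)$ and $\zeta(-2z)/\zeta(1-2z)$ in the relevant region. Taking $s = 7/4-\epsilon$ with $\epsilon$ small enough the series is finite, and summing the per-summand bounds yields $\mathcal{O}(w^{-3/2+2\epsilon})$, which after renaming $\epsilon$ is the theorem's $\mathcal{O}(w^{-3/2+\epsilon})$. The uniformity in $l$ of the per-summand bounds simultaneously validates the earlier interchange of sum and integral via dominated convergence on the partial sums. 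The main obstacle is contained entirely in the three-Case steepest-descent analysis, whose bookkeeping (particularly the Case III contour deformations carried out in the appendix) is the technical heart of the proof; the remaining steps are routine assembly.
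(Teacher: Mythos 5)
Your proposal follows the paper's proof essentially step for step: the same Dirichlet expansion $(\ref{hmj_v_zeta})$ of the reflected tri-zeta term, the same $C_1/C_2/C_3$ contour split with $M=3/2$, the same three steepest-descent case bounds from the appendix, and the same final summation of $h_l l^{-7/4+\epsilon}$ using convergence of $\sum_l h_l/l^s$ for $\mathrm{Re}\,s>3/2$. The only real difference is that you label the interchange of sum and integral as ``dominated convergence on the partial sums'' where the paper runs an explicit uniform-convergence ($\epsilon/3$) argument in the upper limit $y$; the substantive input --- the $y$-uniform, $l$-summable bound $(\ref{uniform_in_h})$ on the partial integrals --- is identical in both, and note that it is needed precisely because the integrand does not decay absolutely along the vertical contour, so a literal domination of the integrand on that contour would not be available.
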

\begin{proof}
The following basic proof rests on showing that the series of integrals in $(\ref{I_sum_over_j})$ converges uniformly in $y$ (see for example \cite{Edwards}, section 3.5, for an almost identical proof). Let $g_l$ denote the integrand of $(\ref{I_sum_over_j})$. First note that $\sum_l g_l$ converges uniformly for all $y$ along finite vertical contours. This follows from the initial integrand estimate $(\ref{hmj_simpler_integrand})$ and the subsequent absolute bounds on $g_l$, $(\ref{jth_summand_absolute})$. We in fact have the bound
\[
\left| g_l \right| < \frac{h_l}{l^{1-x}} c_0 r^{-3/2-x} e^{y(\theta-\pi/2)} < \frac{h_l}{l^{1-x}} c_1,
\]
for all $y>2M$, and where recall the series in $l$ converges provided $1-x>3/2$. Next claim
\begin{equation} \label{uniform_in_h}
\left| \int_{x+2Mi}^{x+iy} g_l(z)dz \right| < c \frac{h_l}{l^{3/2+\epsilon}w^{\kappa}},
\end{equation}
for $-1<x<-1/2$ independent of $y>2M$, and where if $w\ge1$, $\kappa=1$, otherwise, for $w\in(0,1)$, $\kappa = 2$. To prove this, we'll use the case bounds, initially taking $y\to \infty$. Since $w$ can be any value $>0$, the saddle point $2\pi l w^2 i$ can be anywhere in relation to the lower integral limit, and we must consider all case bounds. For case I, we had $\mathcal{O}(h_l/(l^{5/2} w^3))$. For Case II, $\mathcal{O}(h_l/(l^2 w^2))$. Case III is the most restrictive case, namely $\mathcal{O}(h_l/(l^{1-x}w^{-2x}))$, when $-1<x<-1/2$. The worst case is when $x = -1/2-\epsilon$, giving the bound $\mathcal{O}(h_l/l^{3/2+\epsilon}w^{\kappa})$, where the conditions on $\kappa$ account for bounds on $w^{-2x}$ over all $x$ in the range. Now noting the case bounds were constructed by using the absolute value of the integrand, the integral in $(\ref{uniform_in_h})$ for finite $y$ would have a strictly lower value than those considered in the case bounds. This proves the claim. And since the series in $l$ converges over the range of interest for $x$, we have that the sum of integrals in $(\ref{I_sum_over_j})$ converges in this range.

From the claim, we have that the sum,
\[
\sum_{l=1}^{\infty} \int_{x+2Mi}^{x+iy}g_l(z)dz,
\]
converges uniformly in $y$. This suffices to justify swapping the sum and integrals in $(\ref{I_sum_over_j})$, as follows. Consider,

\begin{description}

	\item[i]
	\[
	\left| \sum_{l=1}^{\infty} \int_{x+2Mi}^{x+iy}g_l(z)dz- \sum_{l=1}^{N} \int_{x+2Mi}^{x+iy} g_l(z)dz\right| < \epsilon/3
	\]

	\item[ii]
	\[
	\left| \sum_{l=1}^{N} \int_{x+2Mi}^{x+i\infty}g_l(z)dz- \sum_{l=1}^{\infty} \int_{x+2Mi}^{x+i\infty} g_l(z)dz\right| < \epsilon/3
	\]	
	
	\item[iii]
	\[
	\left| \sum_{l=1}^{N} \int_{x+2Mi}^{x+iy}g_l(z)dz- \sum_{l=1}^{N} \int_{x+2Mi}^{x+i\infty} g_l(z)dz\right| < \epsilon/3
	\]

\end{description}

The inequality in (i) follows since,
\[
\left| \sum_{l=N+1}^{\infty} \int_{x+2Mi}^{x+iy}g_l(z)dz \right| \le \left|\int_{x+2Mi}^{x+iy}g_{N+1}(z)dz \right| +... < c\frac{h_{N+1}}{(N+1)^{3/2}w^{\kappa}} + c\frac{h_{N+2}}{(N+2)^{3/2}w^{\kappa}} + ....
\]
and we can choose $N$ sufficiently large that the sum from $N$ to $\infty$ of the bounds on the summands (from $(\ref{uniform_in_h})$) meet the $\epsilon$ condition, \emph{independent} of $y$. By the same reasoning, allowing $y \to \infty$, the inequality in (ii) is obtained.

Let $N$ be fixed from inequalities (i) and (ii). The inequality in (iii) follows by choosing $y_0$ large enough that, as guaranteed by the convergence of each of the $g_l$ integrals, for all $y>y_0$ and for all $l\in[1,N]$,
\[
\left| \int_{x+2Mi}^{x+iy} g_l(z)dz - \int_{x+2Mi}^{x+i\infty} g_l(z)dz \right| < \epsilon / 3N.
\]
Since the inequality holds for all $l \in [1,N]$, the sum, $\sum_1^N$ must be less than $\epsilon/3$, as desired.

This justifies the inequalities. Transitively they give, for all $y$,
\[
\left| \sum_{l=1}^{\infty} \int_{x+2Mi}^{x+iy}g_l(z)dz - \sum_{l=1}^{\infty} \int_{x+2Mi}^{x+i\infty} g_l(z)dz\right| < \epsilon.
\]
Combined with the uniform convergence of $g_l$, which allows the exchange of order between finite integrals and infinite sums, this proves the sum of integrals, $(\ref{I_sum_over_j})$, equals the integral in $(\ref{I_remainder_w_order_bound})$.

	For the order bound, note for each $l$, the portion of the contour between $x-2Mi$ and $x+2Mi$ has been shown $\mathcal{O} \left(h_l / (l^2 w^{3/2-\epsilon}) \right)$ by $(\ref{middle_portion_hmj})$ (with $x=-3/4+\epsilon$). For the $|y|\ge2M$ portion, the overall bound from the case bounds, as in $(\ref{uniform_in_h})$, gives $\mathcal{O}(h_l / (l^{1+3/4} w^{3/2-\epsilon}))$ for this $x$. This gives, at worst, for the sum of the whole contours along $x=-3/4+\epsilon$, a bound of
\[
\sum_{n=1}^{\infty} c_1\frac{h_n}{l^{1+3/4}w^{3/2-\epsilon}} = c_2w^{-3/2+\epsilon},
\]
proving the theorem.
\end{proof}

\subsection{\bf Summary of $\mathbf{I_{j,k}(w)}$ residue form.} \label{remainder_integral_summary_section}

Combining the results on the remainder integral for the case $j/k \ne 1/2$, $(\ref{non_1_2_order_bound})$ with those for $j/k = 1/2$, $(\ref{I_remainder_w_order_bound})$, shows the validity of equation $(\ref{I_residue})$ for all $j$ and $k$, $k > 1$, $\gcd(j,k)=1$. Equation $(\ref{I_residue})$ can be rewritten as,
\begin{equation} \begin{split} \label{I_residue_full}
I_{j,k}(w) = &\frac{\zeta(1+k/j)}{\zeta(k)} w^{k/j} +  \lim_{m\to\infty}\stackrel{*}{\sum_{|\mbox{Im }\rho|<Y_m/(jk)}} \frac{\zeta(\rho/j+1)\zeta(\rho/k)} {\zeta ^\prime (\rho)}\frac{{ w}^{\rho/j}}{\rho} + \ln w \\
& + \gamma - j\left( 1- \frac{1}{k} \right) \ln 2\pi + \mathcal{O}\left(w^{-3/(2j)+\epsilon}\right)
\end{split} \end{equation}
(* indicates this form for the summands is valid for simple zeta zeros only; note also the remainder $\mathcal{O}$ bound has not been shown to be uniform over $j$ or $k$). This proves the first part of Theorem $\ref{S_jk_theorem}$.

\subsection{\bf The general sum-of-$\mathbf{Q}$'s form, and a first order error term; proof of Theorem $\mathbf{\ref{S_jk_theorem}}$.} \label{sum_of_Qs_section}
\label{I_error_bound_section}

Rewriting the residue sums for $I_{j,k}(w)$ (equation $(\ref{I_residue_full})$) as,
\begin{equation}  \label{I_j_k_w_R}
I_{j,k}(w) =  \frac{\zeta(1+k/j)}{\zeta(k)} {w}^{k/j} + R_{j,k}(w),
\end{equation}
it is possible to obtain overall estimates on the error term, $R_{j,k}(w)$.

To find a $\mathcal{O}$ bound on $R_{j,k}(w)$, we'll use equation $(\ref{I_jk_as_Q_sums})$:
\[
I_{j,k}(w) = \sum_{m=1}^{\lfloor w \rfloor} \frac{1}{m} Q_k\left[ \left( \frac{w}{m} \right)^{k/j} \right],
\]
and denote the order of the error term in $Q_k(x)$ by $\beta_k$. That is,
\begin{equation} \label{Q_k_Oh_bound}
Q_k(x) = \frac{x}{\zeta(k)} + R_{Q_k}(x) = \frac{x}{\zeta(k)} + \mathcal{O}(x^{\beta_k}).
\end{equation}

There has been a lot of work in the literature on finding a close upper bound on $R_{Q_k}(x)$ (see, for example, \cite{Pappalardi} for a summary). It is known that $\beta_k = 1/k$ at worst, and if the Riemann hypothesis is true it is conjectured that $\beta_k = 1/(2k) + \epsilon$ (see \cite{Pappalardi}). The case $k=2$, where $Q_2(x)$ is the square-free counting function, has received special attention. The best known value for $\beta_2$ as of this writing is $17/54+\epsilon$ (see \cite{Jia}).

Substituting $(\ref{Q_k_Oh_bound})$ into equation $(\ref{I_jk_as_Q_sums})$ produces,

\begin{equation} \label{I_as_m_sums}
I_{j,k}(w) = \sum_{m=1}^{\lfloor w \rfloor} \frac{1}{m} \left[ \frac{w^{k/j}}{ \zeta(k) m^{1+k/j} } + \mathcal{O} \left( \left( \frac{w}{m} \right)^{(k/j) \beta_k} \right) \right] = \sum_{m=1}^{\lfloor w \rfloor} \frac{w^{k/j}}{\zeta(k) m^{1+k/j}} +  \sum_{m=1}^{\lfloor w \rfloor} \frac{1}{m} \mathcal{O} \left( \left( \frac{w}{m} \right)^{(k/j) \beta_k} \right).
\end{equation}

Looking at the second sum, by definition of the $\mathcal{O}$ bound we have
\[
\left| \sum_{m=1}^{\lfloor w \rfloor} \frac{1}{m} \mathcal{O} \left( \left( \frac{w}{m} \right)^{(k/j) \beta_k} \right) \right| < \sum_{m=1}^{\lfloor w \rfloor} \frac{1}{m} K \left( \frac{w}{m} \right)^{(k/j) \beta_k} < K\zeta((k/j)\beta_k+1) w^{(k/j)\beta_k} = \mathcal{O}(w^{(k/j)\beta_k})
\]
for some sufficiently large $K$, where the second inequality is conditional on $\beta>0$.

Setting equations $(\ref{I_j_k_w_R})$ and $(\ref{I_as_m_sums})$ equal we obtain
\begin{equation} \begin{split} \label{R_jk_1}
R_{j,k}(w) &= \sum_{m=1}^{\lfloor w \rfloor} \frac{w^{k/j}}{\zeta(k) m^{1+k/j}} + \mathcal{O}(w^{(k/j)\beta_k}) - \frac{\zeta(1+k/j)}{\zeta(k)} w^{k/j} \\
&= -\left( \zeta(1+k/j) - \sum_{m=1}^{\lfloor w \rfloor} \frac{1}{m^{1+k/j}} \right) \frac{w^{k/j}}{\zeta(k)} + \mathcal{O}(w^{(k/j) \beta}).
\end{split} \end{equation}
But by known order bounds on the partial sums of zeta Dirichlet series (see, for example, \cite{Titchmarsh} Theorem 4.11), 
\begin{equation} \label{titch_4_11}
\zeta(1+k/j) - \sum_{m=1}^{w} \frac{1}{m^{1+k/j}} = \frac{j}{k}w^{-k/j} + \mathcal{O}(w^{-1-k/j}),
\end{equation}
to produce, after substituting $(\ref{titch_4_11})$ into equation $(\ref{R_jk_1})$,
\[
-\left( \frac{w^{-k/j}}{(k/j)} + \mathcal{O}(w^{-1-k/j}) \right) \frac{w^{k/j}}{\zeta(k)} + \mathcal{O}(w^{(k/j) \beta_k}) = c_0 + \mathcal{O}(w^{-1}) + \mathcal{O}(w^{(k/j) \beta_k}),
\]
to give,
\begin{equation} \label{R_jk_O_bound}
R_{j,k}(w) = \mathcal{O}(w^{(k/j) \beta_k}).
\end{equation}
This proves the second of the two parts of Theorem $\ref{S_jk_theorem}$, which, in addition to the previous section's proof of the first part, proves the theorem.

\subsection{\bf The particular case of $\mathbf{I_{1,2}(w)}$}

Before getting to the estimates for $S_{j,k}(w)$, we'll compare the estimates of $I_{j,k}(w)$ with its actual values in the particular case of $I_{1,2}(w)$.  From equation $(\ref{I_residue_full})$ we have,
\[
I_{1,2}(w) = \frac{\zeta(3)}{\zeta(2)} {w}^2 + \lim_{m\to\infty}\stackrel{*}{\sum_{|\mbox{Im }\rho|<Y_m/2}} \frac{\zeta(\rho+1)\zeta(\frac{\rho}{2})} {\zeta ^\prime (\rho)}\frac{{ w}^{\rho}}{\rho} + \ln w + \gamma - \frac{1}{2}\ln(2 \pi) + R_I(w)
\]
where $R_I(w) = \mathcal{O}(w^{-3/2+\epsilon})$.

This produces a first order approximation,
\begin{equation} \label{I_first_order}
I_{1,2}(w) \approx \frac{\zeta(3)}{\zeta(2)} {w}^2  \mbox{ when $w\ge 0$, $0$ otherwise}.
\end{equation}

And if we conjecture the contribution from the zeta zero sum has `mean oscillation' zero (for example if $X_t$ is the integral from $0$ to $t$ of the zeta zero sum's contribution, then $\liminf_{t\to\infty}X_t = \mathcal{O}(1)$), then since the oscillatory contribution from the remainder integral, $R_I$, has been shown to be small we have the following `centerline' approximation:
\begin{equation} \label{I_centerline}
I_{1,2}(w) \approx \frac{\zeta(3)}{\zeta(2)} {w}^2 + \chi_{[1,\infty)} \ln w + \gamma - \frac{1}{2}\ln(2 \pi) \mbox{ when $w\ge 0$, $0$ otherwise},
\end{equation}
where $\chi_{[1,\infty)}$ is the indicator function of value $1$ on $[1,\infty)$, and $0$ elsewhere; this serves to truncate the natural logarithm at $w=1$, to avoid large contributions in the region $[0,1)$.

Figures $\ref{I_1_2_figure}$ and $\ref{I_1_2_figure_inset}$ show the first order and centerline estimation curves compared with the actual computed step function. The first order estimate is within 3\% for $w\approx 15$ while the centerline estimate is entwined with the step function itself in this same region.

\begin{figure}[tbp]
\centering
\begin{minipage}{.5\textwidth}
\centering
\includegraphics[width=0.9\textwidth]{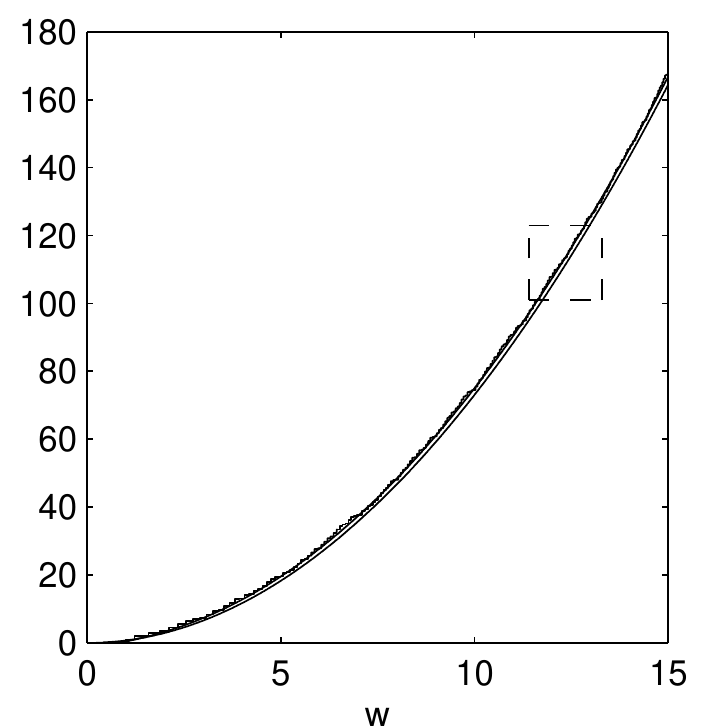}
\caption{\emph{$I_{1,2}(w)$ stairs and estimates.}}
\label{I_1_2_figure}
\end{minipage}\hfill
\begin{minipage}{.5\textwidth}
\centering
\includegraphics[width=0.9\textwidth]{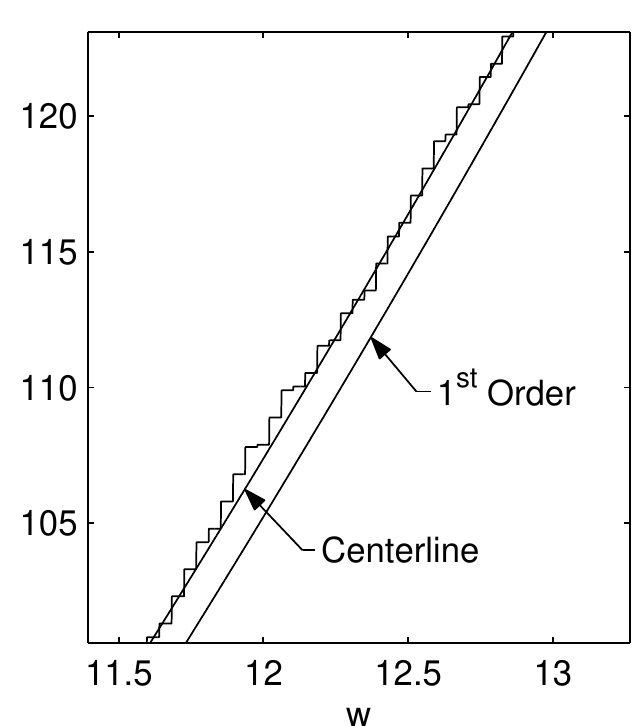}
\caption{\emph{Dashed box inset.}}
\label{I_1_2_figure_inset}
\end{minipage}
\end{figure}

\section{\bf Estimates for $\mathbf{S_{j,k}(w)}$} \label{overall_estimates}

We are now ready to estimate $S_{j,k}(w)$. We'll use the expression of $S_{j,k}(w)$ as the integral of a convolution exponential from equation $(\ref{S_as_conv_exp})$ and the preceding estimates of $I_{j,k}(w)$.

\subsection{\bf Basic results on convolution exponentials.}
In order to develop bounding estimates on $S_{j,k}(w)$ by using estimates on $I_{j,k}(w)$, the following results on convolution exponentials are needed.

\begin{lemma} \label{conv_exp_t_a}
Given $v^a$ supported on $[0,\infty)$, with $a\in(-1,\infty)$,
\[
e^{*c v^a} = \delta_0(t) + \sum_{k=1}^{\infty} \frac{c^k}{k!} \frac{(\Gamma(a+1))^{k}}{\Gamma(k(a+1))} t^{k(a+1)-1},
\]
and the integral, $\int_{t=\alpha}^{w} e^{*c v^a} dt$, where $\alpha$ is $0^+$ or $0^-$, is finite.
\end{lemma}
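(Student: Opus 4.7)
The plan is to establish the explicit formula by computing the convolution powers of $v^a$ directly, then verify the integrability claim via a Mittag-Leffler-type bound. By the definition of the convolution exponential used earlier in the paper, $\exp^*(c v^a) = \delta_0 + \sum_{k=1}^{\infty} \frac{c^k}{k!} (v^a)^{*k}$, so the task reduces to identifying $(v^a)^{*k}$ in closed form.

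First, I would compute the base case
\[
(v^a * v^a)(t) = \int_0^t s^a (t-s)^a\, ds = t^{2a+1} \int_0^1 u^a (1-u)^a\, du = t^{2a+1} \frac{\Gamma(a+1)^2}{\Gamma(2(a+1))},
\]
via the substitution $s = tu$ and recognition of the Beta integral. Induction on $k$, using the Beta--Gamma identity $B(k(a+1), a+1) = \Gamma(k(a+1))\Gamma(a+1)/\Gamma((k+1)(a+1))$, then yields
\[
(v^a)^{*k}(t) = \frac{\Gamma(a+1)^k}{\Gamma(k(a+1))}\, t^{k(a+1)-1}, \qquad t > 0.
\]
The hypothesis $a > -1$ is exactly what keeps each Beta integral finite at the endpoints. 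Substitution into the series for $\exp^*$ then gives the claimed expansion.

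For the integrability claim, I would integrate termwise on $[0,w]$, producing the series $\sum_{k=1}^{\infty} \frac{c^k}{k!} \frac{\Gamma(a+1)^k}{\Gamma(k(a+1)+1)}\, w^{k(a+1)}$, which is essentially a two-parameter Mittag-Leffler function evaluated at $c\,\Gamma(a+1)\,w^{a+1}$. The ratio test, applied using the fact that $\Gamma(k(a+1)+1)$ grows super-geometrically in $k$ whenever $a > -1$, gives absolute convergence for every $w \ge 0$; the same bound also supplies a dominating function on compact $w$-intervals, which justifies the termwise integration via dominated convergence. The distinction between $\alpha = 0^-$ and $\alpha = 0^+$ only affects whether the $\delta_0$ point mass is captured, contributing an additive difference of $1$ but no effect on finiteness.

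The main obstacle I anticipate is purely bookkeeping: keeping the exponent $k(a+1)-1$ aligned with the Beta--Gamma identity through the induction, and confirming that the single hypothesis $a > -1$ suffices both for pointwise integrability of each $(v^a)^{*k}$ near the origin and for super-geometric decay of the coefficients globally. Neither point is conceptually subtle, and the Mittag-Leffler-type convergence is classical.
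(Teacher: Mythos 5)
Your proposal is correct and follows essentially the same route as the paper: compute $(v^a)^{*k}$ by iterating the Beta integral, collapse the product of Beta factors into $\Gamma(a+1)^k/\Gamma(k(a+1))$, and then integrate the series termwise, with the only cosmetic difference being that you justify the termwise integration by a dominated-convergence/ratio-test argument while the paper uses uniform convergence of the tail (where $k(a+1)-1>0$) on $[0,w]$. Both justifications are sound and the hypothesis $a>-1$ is used in the same two places.
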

\begin{proof}

From the definition of the convolution exponential, $e^{*c v^a}[t] = \delta_0(t)+c t^a+(c v^a)^{*2}[t]/2!+...$. Each convolution power $(c v^a)^{*k}$ may be computed from repeated application of the following beta function formula,
\[
\int_0^t v^{q-1}(t-v)^{p-1} dv = t^{p+q-1}B(p,q),
\]
where $B(u,v)$ is the beta function, and with the requirement $p,q>0$ (see for instance \cite{G_R}, section 3.18). For the second convolution power we have,
\[
(c v^a)^{*2} = c^2 \int_{-\infty}^{\infty} \chi_{[0,\infty)}(v) v^a \chi_{[0,\infty)}(t-v) (t-v)^a dv = c^2 \int_0^t v^a (t-v)^a dv = c^2 t^{2a+1} B(a+1,a+1).
\]
Higher powers can be done recursively to give,
\begin{equation} \begin{split} \nonumber
e^{*c v^a} = & \delta_0(t) + \frac{c}{1!} t^a + \frac{c^2}{2!}B(a+1,a+1)t^{2a+1} + \frac{c^3}{3!}B(a+1,a+1)B(a+1,2a+2)t^{3a+2}+... \\
	& +\frac{c^k}{k!}B(a+1,a+1)...B(a+1,(k-1)a+k-1)t^{ka+k-1}+...
\end{split} \end{equation}
Since $B(p,q) = \Gamma(p)\Gamma(q)/\Gamma(p+q)$, the product of $B(a+1,a+1)...B(a+1,(k-1)a+k-1)$ collapses to $(\Gamma(a+1))^{k} / \Gamma(k(a+1))$, proving the series formula.

For the finiteness of the integral, since for $\kappa$ sufficiently large, $k(a+1)-1>0$ for all $k>\kappa$, making $t^{k(a+1)-1}$ monotone increasing in $[0,w]$, we have for some sufficiently large $\kappa' > \kappa$,
\[
\left| \sum_{k=\kappa'}^{\infty} \frac{c^k}{k!} \frac{(\Gamma(a+1))^{k}}{\Gamma(k(a+1))} t^{k(a+1)-1} \right| < \sum_{k=\kappa'}^{\infty} \frac{(|c|\Gamma(a+1))^k w^{k(a+1)-1}}{k! \Gamma(k(a+1))} < \epsilon,
\]
showing the sum converges uniformly in $[0,w]$. This allows the integral to be passed inside. The integral of the first term of the series, the delta function, will be $1$ or $0$, depending on the integral's lower limit. Integrating the remaining sum term by term gives,
\[
\sum_{k=1}^{\infty} \frac{c^k}{k!} \frac{(\Gamma(a+1))^{k}}{\Gamma(k(a+1))} \frac{w^{k(a+1)}}{k(a+1)},
\]
which clearly converges.
\end{proof}

\begin{lemma} \label{convolution_exp_bound}
Given the integral of a convolution exponential,
\[
\int_{t=0^-}^w e^{*f} dt,
\]
where $f$ has support on $[0,\infty)$, $f\ge0$, $f(0)=0$, and $f$ is integrable on any $[0,t_0]$, and suppose $\int_{t=0^-}^w f(t)dt = F(w) < G(w)$ for all $w>0$ where $G\ge 0$, $G$ is differentiable, $G$ has support on $[0,\infty)$, and is allowed to have a jump discontinuity at $0$ of value $c>0$. Also assume the derivative of $G$, call it $g$, obeys $g(t)\ge 0$. Then,
\begin{equation}
\int_{t=0^-}^w e^{*f} dt < \int_{t=0^-}^w e^{*g} dt.
\end{equation}
Under the same conditions except with $G(w)<F(w)$, the inequality may be reversed.
\end{lemma}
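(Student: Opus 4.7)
The plan is to expand both convolution-exponential integrals as power series in convolution powers and then establish the comparison term by term. Let $\mu_f$ and $\mu_g$ denote the distributional derivatives of $F$ and $G$ viewed as positive measures on $[0,\infty)$, so that $\mu_f = f\,dt$, $\mu_g = c\delta_0 + g\,dt$, with $\mu_f([0,w]) = F(w)$ and $\mu_g([0,w]) = G(w)$. Expanding the convolution exponential and integrating yields
\[
U(w) := \int_{0^-}^w e^{*f}\,dt = \sum_{k=0}^{\infty} \frac{A_k(w)}{k!}, \qquad V(w) := \int_{0^-}^w e^{*g}\,dt = \sum_{k=0}^{\infty} \frac{B_k(w)}{k!},
\]
where $A_k(w) := \mu_f^{*k}([0,w])$, $B_k(w) := \mu_g^{*k}([0,w])$, with the convention $A_0 \equiv B_0 \equiv 1$ on $[0,\infty)$. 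Termwise finiteness and convergence of these series are controlled by the same type of estimate used in the proof of Lemma~\ref{conv_exp_t_a}.

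The key step is to prove, by induction on $k$, that $A_k(w) \le B_k(w)$ for all $w \ge 0$. The base case $k=1$ is just the hypothesis $F \le G$. For the inductive step, the standard convolution-of-measures identity gives the recurrences
\[
A_{k+1}(w) = \int_0^w A_k(w-x)\,d\mu_f(x), \qquad B_{k+1}(w) = \int_{0^-}^w B_k(w-x)\,d\mu_g(x),
\]
from which I decompose
\[
B_{k+1}(w) - A_{k+1}(w) = \int \bigl[B_k - A_k\bigr](w-x)\,d\mu_f(x) + \left[ \int_{0^-}^w B_k(w-x)\,d\mu_g(x) - \int_0^w B_k(w-x)\,d\mu_f(x) \right].
\]
The first term is nonnegative by the inductive hypothesis together with $\mu_f \ge 0$.

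The main obstacle is the second term, since the hypothesis $F \le G$ does \emph{not} imply $d\mu_f \le d\mu_g$ pointwise. The resolution is to exploit that $B_k$, being a CDF, is nondecreasing, so the map $\phi(x) := B_k(w-x)$ (extended by zero for $x > w$) is nonneg and nonincreasing on $[0,\infty)$. Applying the layer-cake identity $\int \phi\,d\nu = \int_0^\infty \nu(\{\phi > t\})\,dt$ reduces the comparison to verifying $\mu_f(\{\phi > t\}) \le \mu_g(\{\phi > t\})$ at each level $t$. Each superlevel set is a subinterval of $[0,w]$ of the form $[0,s]$ or $[0,s)$; on closed intervals the comparison is just $F(s) \le G(s)$, while on half-open intervals it uses continuity of $F$ together with the hypothesis to give $F(s) = F(s^-) \le G(s^-)$ (by taking one-sided limits in $F \le G$). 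Hence the second term is also nonnegative, closing the induction.

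Summing the inequalities $A_k \le B_k$ yields $V - U = \sum_{k\ge 1}(B_k - A_k)/k! \ge 0$. Since the $k=1$ summand $G - F$ is strictly positive for $w > 0$ by hypothesis, the final inequality is strict, giving $\int_{0^-}^w e^{*f}\,dt < \int_{0^-}^w e^{*g}\,dt$. The reverse case with $G < F$ follows by interchanging the roles of $f$ and $g$ throughout.
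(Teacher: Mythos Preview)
Your proof is correct and follows essentially the same strategy as the paper: expand both sides as series in convolution powers, then compare term by term via induction on $k$, showing $A_k(w) \le B_k(w)$. The only real difference is in how you handle the cross term in the inductive step. The paper uses the Fubini-type identity $\int_* f^{*m} = F * f^{*(m-1)}$ (and its analogue $G * f^{*(m-1)} = g * (\int_* f^{*(m-1)})$) together with plain pointwise monotonicity of convolution against a nonnegative kernel, giving the short chain
\[
\int_* f^{*m} = F * f^{*(m-1)} < G * f^{*(m-1)} = g * \Bigl(\int_* f^{*(m-1)}\Bigr) < g * \Bigl(\int_* g^{*(m-1)}\Bigr) = \int_* g^{*m}.
\]
You instead keep the decomposition and treat the term $\int B_k(w-x)\,(d\mu_g - d\mu_f)(x)$ by a layer-cake argument, exploiting that $B_k(w-\cdot)$ is nonincreasing so its superlevel sets are initial intervals on which the CDF ordering $F \le G$ applies directly. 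The two devices are equivalent here (both are avatars of integration by parts / Fubini), but your version is a bit more explicit about the measure-theoretic bookkeeping when $G$ carries a point mass at the origin, while the paper's swap identity is shorter once granted.
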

\begin{proof}
From the series expansion of $e^{*f(v)}$, we have terms $\int_{t=0^-}^w (f(v))^{*m} / m!$. So it suffices to show 
\begin{equation} \label{convo_exp_bound}
\int_{t=0^-}^w (f(v))^{*m} < \int_{t=0^-}^w (g(v))^{*m}
\end{equation}
for all $m \ge 1$.

The proof will be by induction on $m$. For compactness in notation, drop the dependent variable $v$, and use $\int_*$ as shorthand for $\int_{t=0^-}^w$.

First claim $\int_* f^{*m} = F*f^{*(m-1)}$. This follows since $f(0)=0$, so that $F(0)=0$, and so $\int_* f = F$, and since for any $f_1$ and $f_2$ with support on the half-line, $[0,\infty)$, $f_1*f_2[t]$ will only have support on $[0,t]$. This allows swapping the order of integration in $\int_* \int_{-\infty}^{\infty}f(t-v)f^{*(m-1)}[v] dv$, and the intermediary relation $\int_{0-}^{w} f(t-v)dt = \int_{0-}^{w-v} f(u)du = F(w-v)$, proving the claim.

For $m = 2$ we'll show,
\begin{equation} \label{convo_exp_bound_3}
F*f < G*g  = \int_* g^{*2}.
\end{equation}
By the monotonicity in the convolution operation for positive functions, note whenever $0 \le p(t) < q(t)$ for all $t$ in the support of $p$ and $q$, it is true that $p*p < q*q$ and that $p*r < q*r$ for all positive integrable $r(t)$. This allows,
\[
F*f < G*f = g*F < g*G.
\]
The middle equality follows from $\int_{0-}^{w} f(t-v)dt = F(w-v)$ just above, and the reasoning leading to it (noting as well that $\int_* g = G$ by construction). This proves equation $(\ref{convo_exp_bound_3})$.

Now suppose $(\ref{convo_exp_bound})$ holds at $m-1$, then
\[
\int_* f^{*(m)} = F*f^{*(m-1)} < G*f^{*(m-1)} = g*\left(\int_* f^{*(m-1)} \right) < g*\left( \int_* g^{*(m-1)} \right) = \int_* g^{*(m)},
\]
where the second inequality follows from the induction hypothesis. This completes the proof of $(\ref{convo_exp_bound})$. 

So 
\[
\int_{0-}^{w} e^{*f} < \int_{0-}^{w} \delta(w) + \sum_{m=1}^{\infty} \frac{1}{m!}  \int_{0-}^{w} g^{*m} = \int_{0-}^{w} e^{*g}.
\]
The same steps may be repeated when $G(w)<F(w)$, reversing the inequality signs.
\end{proof}

Finally note the additive property of convolution exponentials, $e^{*(f+g)} = e^{*f}*e^{*g}$, allows the following relation, which will be useful in discussions on estimates of $S_{j,k}(w)$: $e^{*(c\delta(v) + g(v))} = e^{*c\delta}*e^{*g} = e^{c}e^{*g}$ (the second equality follows from $\delta(v)*h(v)[t]=h(t)$, and from the series form for the convolution exponential, noting $(c\delta(v))^{*m} = c^m \delta(v)$).

\subsection{\bf Estimates on $\mathbf{S_{j,k}}$, the general case.}

Recalling that $I_{j,k}$ and $dI_{j,k}$ and all their estimates have support on $[0,\infty)$, we have the following $S_{j,k}$ estimates.

\subsubsection*{\bf First order estimate.}

Combining equations $(\ref{S_as_conv_exp})$ and $(\ref{I_residue_full})$, and noting the derivative of the first order estimate of $I_{j,k}$ in equation $(\ref{I_residue_full})$ produces the form $e^{*c v^a}$ where $a=k/j-1>-1$, Lemma $\ref{conv_exp_t_a}$ can be applied to show the validity of the convolution exponential and the outer integral. The first order approximation of $S_{j,k}(w)$ then is,
\[
S_{j,k}(w) \approx \int_{t=0^+}^{w} \exp^{*}\left( \frac{k}{j} \frac{\zeta(1+k/j)}{\zeta(k)} w^{k/j-1} \right) dt.
\]

\subsubsection*{\bf Centerline estimate.}

Though again the justification is conjectural (as in the discussion leading to $(\ref{I_centerline})$) the following centerline estimate is given (where to obtain $dI$ from $(\ref{I_residue_full})$, we've set $d/dw(\ln w)=0$ in $w\in[0,1]$, and, requiring the integrand to be zero at the lower integral limit, we've treated the constant term as a jump discontinuity, giving a $(\gamma - (1-j/k)\ln(2\pi))\delta_0$ term in $dI$; it emerges as a constant of scale, for the reasons mentioned just below the proof of Lemma $\ref{convolution_exp_bound}$):

\[
S_{j,k}(w) \approx e^{\gamma - j(1-1/k) \ln 2\pi } \int_{t=0^+}^{w} \exp^{*}\left( \frac{k}{j} \frac{\zeta(1+k/j)}{\zeta(k)} w^{k/j-1} + \chi_{[1,\infty)} 1/w \right) dt,
\]
Lemma $\ref{conv_exp_t_a}$ has been applied again as follows: note the argument of the convolution exponential can be bounded by $c_0 w^{k/j-1}$ for some $c_0>0$, for all $w\in[0,\infty)$; Lemma $\ref{conv_exp_t_a}$ is then invoked with $cv^a=c_0v^{k/j-1}$; by monotonicity of convolution powers for positive functions (if $f>g$ then $f^{*k} > g^{*k}$), the convolution exponential and its integral are then finite.

\subsubsection*{\bf Order bounds.}

$S_{j,k}(w)$ obeys the following $\mathcal{O}$ bound,
\[
S_{j,k}(w) = \int_{t=0^-}^{w} \exp^{*}\left( \frac{k}{j} \frac{\zeta(1+k/j)}{\zeta(k)} w^{k/j-1} + \frac{1}{j} \mathcal{O}(w^{1/j-1})\right) dt - 1.
\]
First, we've used the error term formulation of $I_{j,k}$ in $(\ref{I_j_k_w_R})$, where the error term is at worst of order $w^{1/j}$. Lemma $\ref{convolution_exp_bound}$ is then applied as follows. From the definition of an order bound, $I_{j,k}(w)$ can at worst be bounded below by $\max \{H(w) - c_l w^{1/j} - b_l, 0 \}$, and above by $H(w) + c_h w^{1/j} + b_h$, where $H(w)=\zeta(1+k/j)w^{k/j}/\zeta(k)$. Since $k\ge 2$, both the lower and upper bounds are then monotone increasing, making both bounds' derivatives $\ge 0$, completing the requirements of the lemma. (Note also the constant term, $b_h$, when differentiating the bound, will produce $b_h \delta_0$ in the derivative, which has been accounted for in the lemma; there is no jump discontinuity for the lower bound, since $H(0)=0$, and the bound is not allowed to go below $0$.) To prove finiteness of the resulting convolution exponential and its integral, apply Lemma $\ref{conv_exp_t_a}$ by noting that for any fixed $w>0$, there is some $c_w>0$ such that,
\[
c_w v^{1/j-1} \ge v^{k/j-1}+\frac{c}{j}v^{1/j-1} \ge 0,
\]
for all $v\in[0,w]$. We can then apply the lemma with $cv^a=c_w v^{1/j-1}$. The resulting finite bound, $e^{*c_w v^{1/j-1}}$, then applies by monotonicity of convolution powers for positive functions as in the centerline estimate.

\subsection{\bf Estimates in the particular case of $\mathbf{S_{1,2}(w)}$.}

We'll examine some of the differences between the estimates and the actual step function in the case of the sum of square roots counting function, $S_{1,2}(w)$.

\subsubsection*{\bf First order estimate of $\mathbf{S_{1,2}(w)}$.}

In the case of sums of square roots, a simple first order estimate can be derived by using the first order estimate of $I_{1,2}(w)$ from equation $(\ref{I_first_order})$. Lemma $\ref{conv_exp_t_a}$ gives,
\[
e^{*\chi_{[0,\infty)} cw} = \delta(w) + \sum_{m=1}^{\infty} c^m \frac{w^{2m-1}}{(2m-1)! m!}.
\]
This produces
\begin{equation} \label{S_first_order}
S_{1,2}(w) = \int_{t=0^+}^{w} e^{*dI_{1,2}} dt \approx \int_{t=0^+}^{w} e^{*2t\zeta(3) / \zeta(2)} dt =  \sum_{m=1}^{\infty} \left(\frac{2\zeta(3)}{\zeta(2)} \right)^m \frac{w^{2m}}{(2m)! m!}.
\end{equation}

\subsubsection*{\bf Centerline estimate of $\mathbf{S_{1,2}(w)}$.}

Because of the sensitivity of $S_{1,2}(w)$ to errors in estimating $I_{1,2}(w)$, the first order estimate of $S_{1,2}(w)$ can be improved (at least for values of $w$ up to around $30$) by using the so-called centerline estimate for $I_{1,2}(w)$ above (equation $(\ref{I_centerline})$). It gives for $dI_{1,2}$,
\[
dI_{1,2} \approx \frac{2\zeta(3)}{\zeta(2)} w + \chi_{[1,\infty)} \frac{1}{w} + \left( \gamma - \frac{1}{2}\ln(2 \pi) \right) \delta(w) .
\]

This produces,
\[ 
S_{1,2}(w) \approx \int_{t=0^+}^{w} \exp^{*} \left( \frac{2\zeta(3)}{\zeta(2)}w + \chi_{[1,\infty)} \frac{1}{w} + (\gamma - \frac{1}{2}\ln(2 \pi)) \delta(w) \right) dt 
\]
\begin{equation} \label{S_centerline}
= e^{\gamma - \frac{1}{2}\ln(2 \pi)} \left( \sum_{n=1}^{\infty} \left(\frac{2\zeta(3)}{\zeta(2)} \right)^n \frac{w^{2n}}{(2n)! n!} \right) * \left(  e^{*\chi_{[1,\infty)} 1/w} \right).
\end{equation}
The righthand factor, $e^{*\chi_{[1,\infty)} 1/w}[t]$, appears to have no analytical expression, though it can be closely bounded between constants using Stirling numbers of the first kind and the point mass approximation: $\sum_{n=1} \delta(w-n)/n $; numerical tests suggest it is quickly asymptotic (by $t\approx3$) to $\approx 0.59$.

\subsubsection*{\bf Bounds on $\mathbf{S_{1,2}(w)}$.}

The $\mathcal{O}$ bound on $I_{1,2}(w)$ from equation $(\ref{R_jk_O_bound})$ with the help of Lemma $\ref{convolution_exp_bound}$ can produce order bounds on $S_{1,2}(w)$:

\[
S_{1,2}(w) = \left( 1 + \sum_{n=1}^{\infty} \left(\frac{2\zeta(3)}{\zeta(2)} \right)^n \frac{w^{2n}}{(2n)! n!} \right) * \left(e^{*\mathcal{O} (w^{2\beta_2-1})} \right) - 1.
\]

\subsubsection*{\bf Discussion of estimates of $\mathbf{S_{1,2}(w)}$.}

Figures $\ref{S_estimates}$ and $\ref{S_estimates_inset}$ show comparisons between the actual curve of $S_{1,2}(w)$ and the estimates of equations $(\ref{S_first_order})$ and $(\ref{S_centerline})$. These curves show the rapid increase of $S_{1,2}(w)$ as it exceeds the order of any polynomial but remains sub-exponential.

The centerline estimate stays within 10\% of the actual curve while the first order estimate shows a wider variation. 

\begin{figure}[tbp]
\centering
\begin{minipage}{.5\textwidth}
\centering
\includegraphics[width=0.9\textwidth]{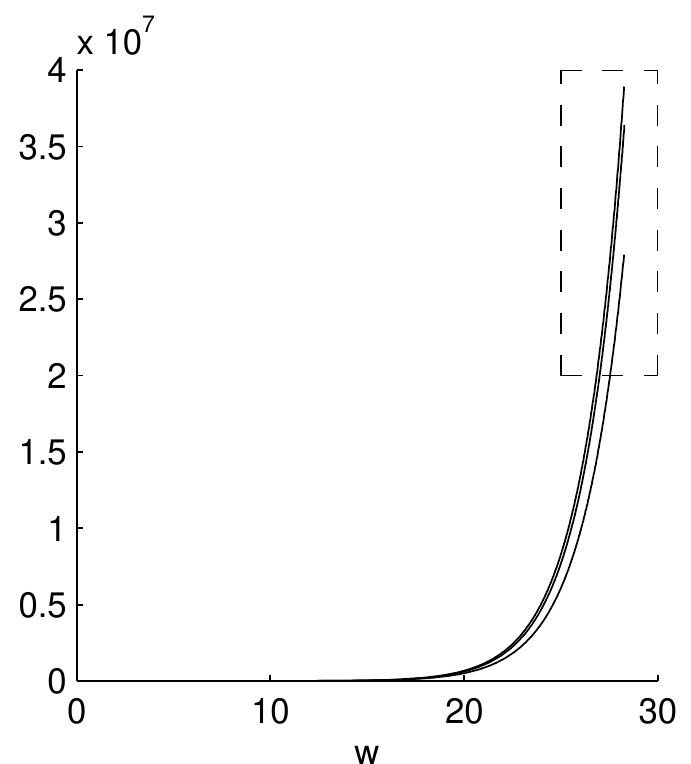}
\caption{\emph{Estimates of $S_{1,2}(w)$.}}
\label{S_estimates}
\end{minipage}\hfill
\begin{minipage}{.5\textwidth}
\centering
\includegraphics[width=0.9\textwidth]{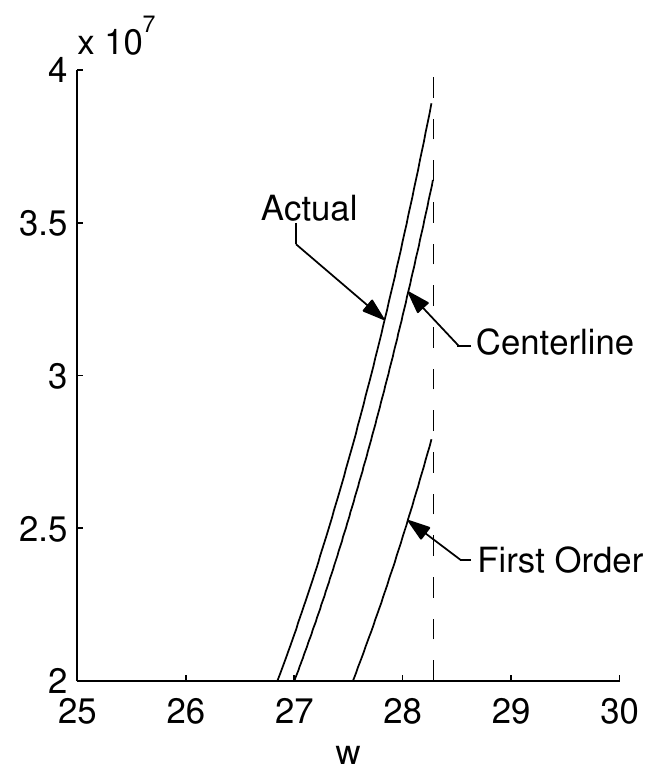}
\caption{\emph{Dashed box inset.}}
\label{S_estimates_inset}
\end{minipage}
\end{figure}

\subsection{\bf Remarks on accuracy in the general case, $\mathbf{S_{j,k}(w)}$.}

Though in general the first order and centerline $I_{j,k}$ estimates initially do not deviate from the $I_{j,k}$ staircase by more than a few percent, the effect of the error is magnified when $dI_{j,k}$ is put through the convolution exponential; consider that shift upward in the estimate of $I_{j,k}$ of magnitude $1$ translates to a point mass of weight $1$ for $dI$'s estimate at $w=0$, which in turn results in an additional factor of $e^1$ in the estimate of $S_{j,k}$ (to help see why, see just below the proof of Lemma $\ref{convolution_exp_bound}$). In other words, estimates for $S_{j,k}$ are very sensitive to errors in the $I_{j,k}$ estimates; offset errors in the second translate into errors of \emph{scale} for the first. From numerical checks when $\{j,k\} \ne \{1,2\}$, both the first order and centerline estimates for example can be quite poor in terms of relative error. 

If greater accuracy is required in estimating $S_{j,k}$, one approach that has worked in practice are hybrid estimates to $I_{j,k}$. One involves using the first order estimate in $[0,w_0]$ for some suitable $w_0$, then the centerline estimate is used for the remainder of the interval of support. (Another improvement may result from truncating the $I_{j,k}$ estimate below $w=1$, so that both the estimate to $I$ and its derivative are zero in $[0,1]$.) Another form of hybrid estimate would be to hard code the actual $I_{j,k}$ and $dI_{j,k}$ function up to some threshold $w_0$, then resume with the first order or centerline estimates for the rest, since on a fixed interval $t\in[0,w]$, with $f(t)$ supported in $[0,\infty)$, the convolution exponential of $f$ is especially sensitive to errors in $f$ at low values of $t$.

\noindent\hrulefill

\appendix
\section*{\bf Appendix}
\renewcommand{\thesection}{A}

\subsection*{\bf Case bounds.}
This section is devoted to finding order bounds in $l$ (and the accompanying $w$ order) on the following integral (equation $(\ref{jth_summand_asymptote})$):
\[
\frac{ h_l }{ l } \frac{1}{2\sqrt{2 \pi}} \int_{x+y_0i}^{x+i\infty} e^{i\frac{\pi}{2}z} z^{-(z+\frac{3}{2})} (2\pi l w^2 e)^z dz,
\]
where $l\ge1$ and $y_0 \ge 2M = 3$ and the requirement of $-3/2 < x < -1/2$ (from the requirement on the expansion of the negative tri-zeta term (equation $(\ref{hmj_v_zeta})$) and from the low $x$ limit for convergence of vertical contours (equation $(\ref{I_remainder_bel_1_k})$) ). 

The method for a given $l$ and $y_0$ is to take the absolute value of the integrand and follow the path(s) of steepest descent, which allows finding a tight bound while avoiding the complications that can arise from oscillatory terms. The Cauchy integral theorem then guarantees the equivalence between the original vertical contour of  $(\ref{jth_summand_asymptote})$  and the steepest descent paths since the integrand has no off-axis singularities in the negative-real half-plane when $|\mbox{Im }z|>0$. 

Recall the location of the saddle point in the $+\mbox{Im }$ half-plane: $i\alpha_l = 2\pi i l w^2$. Under the scaling $s = z/ (l w^2)$ equation $(\ref{jth_summand_asymptote})$ became
\[
\frac{ h_l }{ l^{\frac{3}{2}} w} \frac{1}{2\sqrt{2 \pi}} \int_{s=\frac{x}{l w^2}+\frac{iy_0}{lw^2}}^{\frac{x}{l w^2}+i\infty} s^{-\frac{3}{2}} \exp\left[ l w^2 s \left( \frac{i\pi}{2} + 1 + \ln \left(\frac{2\pi}{s} \right) \right) \right] ds.
\]
Rewriting the exponential portion as $e^{lw^2(u(a,b) + iv(a,b))}$, the negative of the gradient of $u(a,b)$ is shown superimposed on the complex s-plane in Figure $\ref{SD_Case_contours}$. The saddle point is shown at $2\pi i$ along with the proposed (scaled) contours that will be used below. The contours have been chosen according to the method of steepest descent to minimize the error in estimating bounds for equation $(\ref{jth_summand_asymptote})$ when evaluating it by taking the absolute value of its integrand.

\begin{figure}[h]
\centerline{\includegraphics{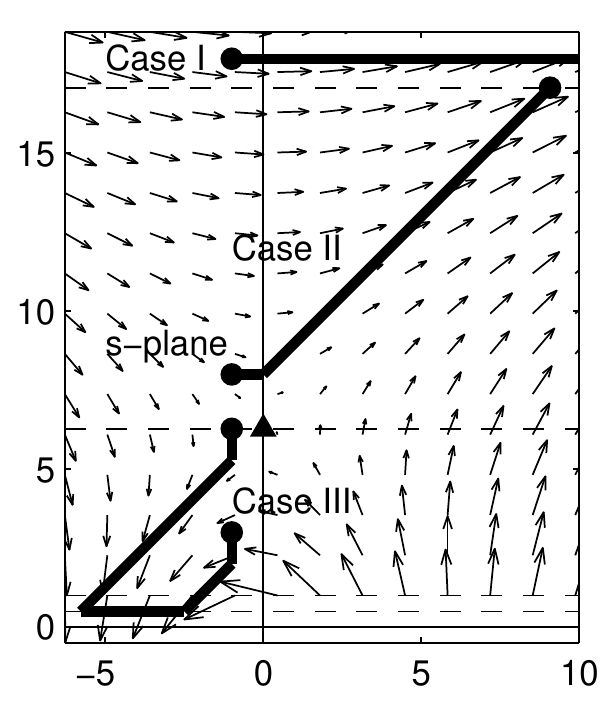}}
\caption{Contours in the scaled s-plane used for the Case bounds. The vector field depicts the negative gradient of $u(a,b)= a\ln(2\pi) + a(1-\ln(r)) + b(\theta-\pi/2)$. The triangle denotes the saddle point and the horizontal dashed lines are at $iM$, $2iM$, $2\pi i$ and $2\pi e i$.}
\label{SD_Case_contours}
\end{figure}

The following bounds will be derived by considering the contours in Figure $\ref{SD_Case_contours}$. We'll use the absolute value of the integrand in $(\ref{jth_summand_asymptote})$ without the leading $h_l / ( 2\sqrt{2 \pi} l) $ term (for simplicity of notation):
\[
\left| e^{i\frac{\pi}{2}z} z^{-(z+\frac{3}{2})} (2\pi l w^2 e)^z \right| = r^{-3/2} e^{y(\theta - \pi/2)} \left( \frac{\alpha_l e}{r} \right) ^x \equiv F(z)
\]

Also for clarity in notation, denote $x$ in the vertical contour of $(\ref{jth_summand_asymptote})$ by $x_0=x$.

Examining the following three cases of $iy_0$ in reference to saddle point $i\alpha_l$ will provide the needed bounds:
\begin{enumerate}
\item[Case I:] $\alpha_l e \le y_0$
\item[Case II:] $\alpha_l \le y_0 \le \alpha_l e$
\item[Case III:] $y_0 \le \alpha_l$
\end{enumerate}

In all Cases, the following bounds on the arctangent will be useful:
\begin{equation} \label{arctan_bounds}
\frac{u}{v} - \frac{1}{3} \left( \frac{u}{v} \right)^3  < \arctan \left(\frac{u}{v} \right) \le \frac{u}{v} \mbox{when $0\le u \le v$}
\end{equation}
along with the well-known identity $\arctan \left( u/v\right) = \pi/2 - \arctan \left(v/u \right)$

\subsubsection*{\bf Case I}
In this Case, $\alpha_l e < y_0$. As suggested by $u(a,b)$ and Figure $\ref{SD_Case_contours}$, the contour will be changed from the line connecting $x+iy_0$ and $x+i\infty$ to the line connecting $x+iy_0$ and $\infty+iy_0$. (At the end of the Cases below, these horizontal and vertical contours in the original integral, $(\ref{hmj_simpler_integrand})$, will be shown to be equal.)

The horizontal contour will be split into three parts. Part I will be the line segment connecting $x_0+iy_0$ and $iy_0$, Part II will be the segment beteween $iy_0$ and $y_0+iy_0$ and Part III will be the segment connecting $y_0+iy_0$ and $\infty+iy_0$.

Part I: 
By $(\ref{arctan_bounds})$ combined with $e^{y_0(\theta - \pi/2)} = e^{y_0(\arctan(|x|/y_0))} < e^{|x|}$, and since for any $x$ on the contour,
\[
\left( \frac{\alpha_l e}{ r} \right)^{x} \le \left( \frac{\alpha_l e}{ r} \right)^{x_0} < \left( \frac{\alpha_l e}{c y_0} \right)^{x_0}
\]
(where the last inequality follows by choosing $c$ such that $r<cy_0$ for all $y_0>2M=3$ and $x_0\in(-3/2,-1/2)$), 
\[
\int_{x_0+iy_0}^{iy_0} F(z)dz < \int_{x_0+iy_0}^{iy_0} y_0^{-3/2} e^{|x_0|} \left( \frac{\alpha_l e}{c y_0} \right)^{x_0} dz < y_0^{-3/2} e^{|x_0|}  \left( \frac{\alpha_l e}{c y_0} \right)^{x_0} |x_0|.
\]
Writing $y_0 = t \alpha_l e$, $t>1$, the bound becomes,
\begin{equation} \label{Case_I_a}
t^{-3/2-x_0} c^{-x_0} e^{|x_0|} |x_0| (\alpha_l e)^{-3/2} < c^{-x_0} e^{|x_0|} |x_0| (\alpha_l e)^{-3/2}
\end{equation}
where the inequality follows since $-3/2-x_0<0$ and $t>1$.

Part II:
By $(\ref{arctan_bounds})$ again,
\[ \textstyle
e^{y_0(\theta - \pi/2)} < e^{y_0(-x/y_0 + (1/3)(x/y_0)^3)} < e^{-(2/3)x}
\]
where the last inequality results from applying a bounding chord between the contour endpoints at $x=0$ and $x=y_0$ of the curve of positive concavity in this region, $y_0(-x/y_0 + (1/3)(x/y_0)^3)$. Since $r^{-3/2} < y_0^{-3/2}$, and $\left( \alpha_l e /r \right) ^x \le 1$, 
\begin{equation} \label{Case_I_b}
\int_{iy_0}^{y_0+iy_0} F(z) dz < \frac{3}{2} y_0^{-3/2} \left( 1-e^{-(2/3)y_0} \right)
\end{equation}

Part III: 
On this contour, $e^{y(\theta-\pi/2)} \le e^{-y_0 \pi/4}$, $r^{-3/2} < y_0^{-3/2}$ and $\left(\alpha_l e / r\right)^x \le \left(1 / \sqrt{2}\right)^x$ since $\alpha_l e \le y_0$. So,
\[
 \int_{y_0+iy_0}^{\infty+iy_0} F(z) dz <  y_0^{-3/2} e^{-y_0 \pi/4} \int_{x=y_0}^{\infty} \left(\frac{1}{\sqrt{2}}\right)^{x} dz 
\]
\begin{equation} \label{Case_I_c}
< \frac{2}{\ln 2} y_0^{-3/2} e^{-y_0 (\pi/4+\ln(\sqrt{2}))} 
\end{equation}

\subsubsection*{\bf Case II}
In this Case, the integral of $(\ref{jth_summand_asymptote})$ has lower limit $y_0$ between $\alpha_l$ and $\alpha_l e$. The contour also will consist of three parts: the path $x_0+iy_0$ to $iy_0$, the path along a diagonal of slope $+1$ to $\alpha_l e - y_0 + i\alpha_l e$, and the horizontal contour from  $\alpha_l e - y_0 + i\alpha_l e$ to $\infty + i\alpha_l e$.

Part I:
By reworking Case I with $\alpha_l$ in place of $\alpha_l e$, the bound for the section $x_0+iy_0$ to $iy_0$ is $c^{-x_0} e^{|x_0|} |x_0| (\alpha_l)^{-3/2}$.

Part II:

Call this diagonal contour $C_d$. Rewrite $F(z)$ as $r^{-3/2} e^{y(\theta-\pi/2)+x(\ln (\alpha_l e)-\ln r)}$, then noting along $C_d$, $y=x+y_0$ and $r=\sqrt{x^2+(x+y_0)^2}$, and that $\theta = \tan^{-1} (-x/(x+y_0)) + \pi/2$, define
\[
h(x) = (x+y_0)\tan^{-1}\left( \frac{-x}{x+y_0} \right) + x\left[ \ln(\alpha_l e)-\frac{1}{2}(x^2+(x+y_0)^2)\right].
\]
Next factor out the $\alpha_l$ term, and let $v=x/y_0$ to get $h(x) = (\alpha_l/y_0)^x e^{g(v)}$, where,
\[
g(v) = y_0 \left( (1+v)\arctan \left( -v / (1+v) \right) +v \left( 1-(1/2)\ln(v^2+(1+v)^2) \right) \right).
\]

This gives $F(z) = r^{-3/2} \left( \frac{\alpha_l}{y_0} \right)^x e^{g(v)}$. The following bound applies to $g(v)$: $g(v) \le -(y_0 / 2) v^2$ on $v \in [0,e-1]$ (shown at the end of the Cases below). Also, along the contour, $r^{-3/2} \le {y_0}^{-3/2}$ and $\left( \alpha_l / y_0 \right)^x \le 1$. So,
\begin{equation} \label{Case_II_a}
\int_{C_d} F(z) dz < {y_0}^{-3/2} \int_{x=0}^{\alpha_l e-y_0} e^{-x^2/(2y_0)} dx < \sqrt{\frac{\pi}{2}}y_0^{-1}
\end{equation}

Part III:
Lastly, for the portion from $\alpha_l e - y_0 + \alpha_l e i$ to $\infty + \alpha_l e i$, Case I, equations $(\ref{Case_I_b})$ and $(\ref{Case_I_c})$ with $y_0=\alpha_l e$ provide the needed bound since the horizontal contour integral is monotone in its lower limit.

\subsubsection*{\bf Case III}
In this Case, $y_0 < \alpha_l$. The path will be as follows: starting at $x_0+iy_0$, go in the $-\mbox{Im}$ direction to meet the diagonal connecting $iy_0$ and $-y_0$ then follow the diagonal down to $-y_0+M+iM$, then proceed in the $-\mbox{Re}$ direction to $-\alpha_l +M + iM$ then along a return diagonal to $x_0 + i(\alpha_l +x_0)$ and finally end at $x_0+\alpha_l i$ after going in the $+\mbox{Im}$ direction. The bounds from Case II will then complete the contour from $x_0 + \alpha_l i$ to $x_0 + i\infty$.

Part I:
First, to treat the vertical portions of the contour along the lines connecting $x_0 + iy_1$ and $x_0 + i(y_1+x_0)$, where $y_1 \in \{ \alpha_l, y_0\}$, the following bounds apply: $r^{-3/2} < (y_1+x_0)^{-3/2}$, $e^{y(\theta-\pi/2)} < e^{|x_0|}$ (by $(\ref{arctan_bounds})$) and 
\[ \textstyle
\left( \frac{\alpha_l e}{r} \right)^x < \left( \frac{\alpha_l e}{y_1 - x_0} \right)^{x_0}.
\]
This produces,
\begin{equation} \label{Case_III_a}
\int_{x_0+iy_1}^{x_0 + i(y_1+x_0)} F(z) dz < (y_1+x_0)^{-3/2}  e^{|x_0|} \left( \frac{\alpha_l e}{y_1 - x_0} \right)^{x_0} |x_0|.
\end{equation}

Part II:
Next, we will find a bound on the diagonal portions. The procedure is very similar to that of Case II, part II. The contour, call it $C_d$, will be $x_0 + i(y_1+x_0)$ to $-y_1 + M +Mi$ where $y_1=y_0$ for the portion descending to $\mbox{Im }z=M$ and where $y_1=\alpha_l$ for the return. Rewrite the integrand as,
\[
r^{-3/2} e^{y(\theta - \pi/2)} \left( \frac{\alpha_l e}{r} \right) ^x = r^{-3/2} \left( \frac{\alpha_l}{y_1} \right)^x e^{g(v)}
\]
where, similar to Case II, $g(v) = y_1 \left( (1+v)\arctan \left( -v / 1+v \right) +v \left( 1-(1/2)\ln(v^2+(1+v)^2) \right) \right)$ with $v = x / y_1$. Here, $g(v)$ can be bounded: $g(v) \le -y_1 v^2$ on $v \in [-1,0]$ (shown at the end of the Cases below). Also, along the contour, $r^{-3/2} \le \left( y_1 / \sqrt{2}\right)^{-3/2}$ and $\left( \alpha_l / y_1 \right)^x \le \left( \alpha_l / y_1 \right)^{x_0}$. So,
\begin{equation} \label{Case_III_b}
\int_{C_d} F(z) dz < \left(\frac{y_1}{\sqrt{2}}\right)^{-3/2} \left( \frac{\alpha_l}{y_1} \right)^{x_0} \int_{x=x_0}^{-y_1+M} e^{-x^2/y_1} dx < \frac{\sqrt{\pi}}{2^{1/4}} y_1^{-1-x_0} {\alpha_l}^{x_0}
\end{equation}

Part III:
Next, we will find a bound on the horizontal portion from $-y_0+M+Mi$ to $-\alpha_l +M+Mi$; call this contour $C_h$. First, since $\theta \in [3\pi / 4,\pi)$, $e^{y(\theta-\pi/2)} < e^{M \pi/2}$. Next, to bound $\left( \alpha_l e / r \right)^x$, rewrite it as $\exp[x(\ln(\alpha_l e)-\ln r)] = e^{h(x)}$. Since $x$ is at most $-M$ (this occurs in the event $y_0=2M$), take for the interval of interest $I_x: x \in [M-\alpha_l,-M]$. We'll show $h(x)$ is negative and monotone increasing in this interval. This allows bounding the whole term by its value at its rightmost (most positive) possible endpoint. Since $0<r<\alpha_l<\alpha_l e$, $\ln(\alpha_l e / r) > 0$. This proves $h(x)<0$ on the interval in question. Next, take the derivative of $h(x)$:
\[
\frac{d}{dx} h(x) = 1-\frac{x^2}{x^2+M^2} + \ln \left(\frac{\alpha_l}{r} \right).
\]
Since both $1-x^2/(x^2+M^2)$ and the $\ln$ term are greater than zero on $I_x$, this proves $h(x)$ is monotone increasing, as desired. This allows the bound,
\[ \textstyle
\left( \frac{\alpha_l e}{r} \right)^x \le \left( \frac{\alpha_l e}{\sqrt{2}M} \right)^{-M},
\]
valid on $I_x$. The whole integral can then be bounded,
\begin{eqnarray} \label{Case_III_c}
\int_{C_h} F(z) dz &<& e^{M \pi/2} \left( \frac{\alpha_l e}{\sqrt{2}M} \right)^{-M} \int_{C_h} r^{-3/2} dz \nonumber\\
&<& 2 e^{M \pi/2} \left( \frac{\alpha_l e}{\sqrt{2}M} \right)^{-M} \left( \frac{1}{\sqrt{y_0-M}} - \frac{1}{\sqrt{\alpha_l-M}}\right)
\end{eqnarray}
(where, recall, $M = 3 / 2$).

Part IV:
For the remainder of the contour, $x_0 + i \alpha_l$ to $x_0 + i\infty$, the bounds of Case II can be used by setting $y_0 = \alpha_l$.

\subsection*{\bf Supporting bounds.}
This section addresses the open items from the Cases.

\subsubsection*{\bf Horizontal vs. vertical contours.}
In Cases I and II, a bound was developed for a horizontal contour. To show this is equivalent to the original vertical contour in $(\ref{hmj_simpler_integrand})$, we will first show that a contour from $z=K+iy_0$ to $K+iy_1$, $y_0<y_1$ and $K>>0$ approaches zero as $K\to\infty$. Since $e^{y(\theta-\pi/2)} < 1$, $F(z)$, the integrand of the principal term in the asymptotic expansion, $(\ref{asymptote_integrand})$, is less than $(\alpha_l e)^K (K^2+y^2)^{-(1/2)(K+3/2)}$, producing the bound
\[
\int_{K+iy_0}^{K+iy_1} F(z) dz < c(y_1-y_0) \frac{(\alpha_l e)^K}{(K^2+y_0^2)^{(1/2)(K+3/2)}}
\]
which goes to zero in the limit $K\to\infty$. Since the bound on the integrand also holds after including the remainder terms $R_k$ (within the mentioned constant factor), the integral of $(\ref{hmj_simpler_integrand})$ must go to zero for this modified contour in the limit of $K$ as well.

Next, for a contour from $x_0+iy_1$ to $K+iy_1$ where $y_1>\alpha_l e$, Case I shows a bound of $\mathcal{O} \left( y_1^{-3/2-x_0} \right)$ for fixed $j$ and so must vanish as $y_1\to\infty$ for all $K>0$. Since, again, the bound applies equally well when the remainder terms are included, $(\ref{hmj_simpler_integrand})$ must vanish along this contour in the limit $y_1 \rightarrow \infty$.

Since the integrand of $(\ref{hmj_simpler_integrand})$ has no singularities in the $y>0$ half-plane, we can apply the Cauchy integral theorem to the rectangle formed by $x+iy_0$, $K+iy_0$, $K+iy_1$ and $x+iy_1$. Taking the limit in $K$ and $y_1$ shows the equivalence of horizontal and vertical contours in $(\ref{hmj_simpler_integrand})$.

\subsubsection*{\bf The parabolic bounds.}
In Cases II and III, parabolic bounds were used within the integrand for the diagonal portions of the contours. To justify these bounds, recall the function in question was
\[ \textstyle
g(v) = y_1 \left( (1+v)\arctan \left( \frac{-v}{1+v} \right) +v \left( 1-\frac{1}{2}\ln(v^2+(1+v)^2) \right) \right)
\]
and a bound of $-y_1 v^2$ was claimed on $v \in [-1,0]$ and a bound of $-(y_1 / 2) v^2$ was claimed on $v \in [0,e-1]$. For the bound when $v \le 0$, notice the bound and $g(v)$ both are equal to zero and have a derivative of zero at $v=0$ as well as share a point of intersection at $v=-1$. Two applications of the mean value theorem will show for there to be another point of intersection in the interior $v \in (-1,0)$, the second derivatives of $g(v)$ and the bound must agree in at least two points within the interval. Since $g^{\prime \prime}(v) = -2y_1(v+1) / (v^2+(1+v)^2)$, it is easy to verify this does not occur. Checking an arbitrary point in the interior that the bound is greater than $g(v)$ completes the proof. Similar reasoning applies for the bound when $v \ge 0$, noting that a point of intersection occurs within $v \in (e-1,4)$.

\end{document}